\theoremstyle{definition}
\newtheorem{thm}{Theorem}[section]
\newtheorem{prop}[thm]{Proposition}
\newtheorem{cor}[thm]{Corollary}
\newtheorem{lemma}[thm]{Lemma}
\newtheorem{definition}{Definition}
\newtheorem*{remark}{Remark}
\numberwithin{equation}{section}
\newcommand{\R}{\mathbb{R}}
\newcommand{\N}{\mathbb{N}}
\newcommand{\p}{\partial}
\newcommand{\Det}[1]{\mbox{Det}\left(#1\right)}
\newcommand{\Detg}[2]{\mbox{Det}_{#2}\left(#1\right)}
\newcommand{\tr}[1]{~\mbox{tr}\left(#1\right)}
\newcommand{\W}{W^{1,n}_{\textit{loc}}}
\renewcommand{\l}{\left}
\renewcommand{\r}{\right}
\renewcommand{\det}[1]{~\mbox{det}\left(#1\right)}
\newcommand{\eps}{\varepsilon}
\renewcommand{\d}{\partial}
\renewcommand{\d}{\delta}
\newcommand{\mR}{\mathbb{R}}
\newcommand{\supp}{\mathop{\rm supp}}
\newcommand{\abs}[1]{\lvert #1 \rvert}
\newcommand{\norm}[1]{\lVert #1 \rVert}
\renewcommand{\a}{\alpha}
\newcommand{\e}{\epsilon}
\newcommand{\mA}{\mathcal{A}}
\newcommand{\mdiv}{\mathrm{div}}
\renewcommand{\dh}{\Delta_h}
\newcommand{\vp}{\varphi}
\title[$n$-harmonic coordinates and regularity of conformal maps]{$n$-harmonic coordinates and the regularity of conformal mappings}
\author{Tony Liimatainen}
\address{Department of Mathematics and Statistics \\ University of Jyv\"askyl\"a \\ P.O.Box 35, FI-40014 University of Jyv\"askyl\"a, Finland}
\email{tony.t.liimatainen@jyu.fi}
\author{Mikko Salo}
\address{Department of Mathematics and Statistics \\ University of Jyv\"askyl\"a \\ P.O.Box 35, FI-40014 University of Jyv\"askyl\"a, Finland}
\email{mikko.j.salo@jyu.fi}
\begin{document}

\begin{abstract}
This article studies the smoothness of conformal mappings between two Riemannian manifolds whose metric tensors have limited regularity. We show that any bi-Lipschitz conformal mapping or $1$-quasiregular mapping between two manifolds with $C^r$ metric tensors ($r > 1$) is a $C^{r+1}$ conformal (local) diffeomorphism. This result was proved in \cite{Iwaniec_thesis, Reshetnyak78, Shefel}, but we give a new proof of this fact. The proof is based on $n$-harmonic coordinates, a generalization of the standard harmonic coordinates that is particularly suited to studying conformal mappings. We establish the existence of a $p$-harmonic coordinate system for $1 < p < \infty$ on any Riemannian manifold. 
\end{abstract}

\maketitle

\section{Introduction}

This article addresses the following question: given a conformal mapping between two smooth ($=C^{\infty}$) manifolds having metric tensors of limited regularity, how regular is the mapping? This question is very classical if one considers isometries instead of conformal mappings~\cite{Myers, HW,  Calabi}: it is known that a distance preserving homeomorphism between two Riemannian manifolds with $C^r$ ($r > 0$) metric tensors is in fact a $C^{r+1}$ isometry. We refer to~\cite{Taylor} for a recent treatment of this topic and related questions based on the systematic use of harmonic coordinates.

There has been considerable interest in the regularity of conformal mappings between domains in Euclidean space. Classical proofs of the Liouville theorem show that any $C^3$ conformal mapping between domains in $\mR^n$, $n \geq 3$, is a restriction of a M\"obius transformation and therefore $C^{\infty}$. The regularity assumption has been reduced in several works. It is known that any $W^{1,n}_{loc}$ solution of the Cauchy-Riemann system is a M\"obius transformation (see~\cite{Iwaniec}), and in even dimensions $n \geq 4$ this has been improved to the optimal case of $W^{1,n/2}_{loc}$ solutions~\cite{Iwaniec}. 

For conformal mappings between general Riemannian manifolds, it was proved by Lelong-Ferrand~\cite{Ferrand} that a conformal homeomorphism between two $C^{\infty}$ Riemannian manifolds is $C^{\infty}$. The works of Reshetnyak\ \cite{Reshetnyak78}, Iwaniec\ \cite{Iwaniec_thesis} and Shefel\ \cite{Shefel} imply the natural regularity for conformal mappings: a conformal mapping between two Riemannian manifolds with $C^r$ ($r>0$ not an integer) metric tensors is $C^{r+1}$ regular. We also mention the interesting recent articles \cite{Capogna} and \cite{MatveevTroyanov} which establish smoothness for subRiemannian and Finsler isometries.

In this paper we give a new proof that any bi-Lipschitz conformal mapping, or more generally any $1$-quasiconformal mapping with $W^{1,n}_{loc}$ regularity, between two Riemannian manifolds with $C^r$ ($r > 1$) metric tensors is in fact a $C^{r+1}$ conformal diffeomorphism. Similarly as in~\cite{Taylor}, this is done via a special coordinate system in which tensors on the manifold have maximal regularity. However, instead of harmonic coordinates that are useful in the regularity analysis of isometries, we employ a system of \emph{$n$-harmonic coordinates}. The existence and regularity of such coordinate systems on any Riemannian manifold is the other main result in this paper.

To explain $n$-harmonic coordinates, we recall that a function $u$ on a Riemannian manifold is called $p$-harmonic ($1 < p < \infty$) if it satisfies the nonlinear elliptic equation ($p$-Laplace equation) 
$$
\delta(\abs{du}^{p-2} du) = 0.
$$
Here $\abs{\,\cdot\,}$ is the norm induced by the Riemannian metric, $d$ is the exterior derivative, and $\delta$ is the codifferential (the adjoint of $d$ in the $L^2$ inner product on differential forms). A coordinate system is called $p$-harmonic if each coordinate function is $p$-harmonic.

The special property of $n$-harmonic functions is that pullbacks by conformal mappings preserve this class. In this way, any conformal mapping can be expressed as the composition of an $n$-harmonic coordinate chart and the inverse of another such chart. Any system of $n$-harmonic coordinates on a manifold with $C^r$ metric tensor ($r > 1$) always has $C^{r+1}_*$ regularity, and the regularity of conformal mappings follows immediately from these facts.

Any set of $n$-harmonic coordinates depends only on the conformal class of the Riemannian metric. In~\cite{LiimatainenSalo1} we used this fact to show that in $n$-harmonic coordinates the usual conformal curvature tensors may be viewed as elliptic operators. This allowed us to prove elliptic regularity results for solutions of conformal curvature equations and to characterize local conformal flatness in low regularity settings. The $n$-harmonic coordinates seem like a natural tool, and it is an interesting question whether they can be used also for other questions in conformal geometry.


This article may be considered as a partial complement to~\cite{Taylor}, and the formulation and proof of Theorem \ref{mainthm} concerning regularity of conformal mappings are very close to the analogous theorem in~\cite{Taylor} for the case of isometries. The existence of $p$-harmonic coordinates, or more generally $\mA$-harmonic coordinates, for any $1 < p < \infty$ is proved in a similar way as the existence of harmonic coordinates (the special case $p=2$) in~\cite{TaylorToolsForPDE} by perturbing the Cartesian coordinates and solving suitable Dirichlet problems in small balls. However, the underlying equation is nonlinear and solutions have a priori only $C^{1+\alpha}$ regularity for some $\alpha > 0$, so one needs to work with the weak form of the equation. A similar argument was given in~\cite{Salo} where solutions to a variable coefficient $p$-Laplace type equation were obtained by perturbing special $p$-harmonic functions. The present work was motivated by \cite{LS}, where regularity of conformal mappings was a key point. Finally we mention that the results in this paper have been extended to $C^r$ coefficients, $r > 0$, in \cite{JulinLiimatainenSalo}.

\subsection*{Notation}

The space $C^r$, when $r$ is a nonnegative integer, denotes the space of $r$ times continuously differentiable functions. If $r > 0$ is not an integer, so $r = l + \alpha$ for some integer $l$ where $0 < \alpha < 1$, then $C^r$ is the H\"older space consisting of all $C^l$ functions whose $l$th derivatives are $\alpha$-H\"older continuous. We will also use the Zygmund spaces $C^r_*$, defined for $r > 0$ to be the set of those functions $u \in L^{\infty}(\mR^n)$ such that 
$$
\norm{u}_{C^r_*} := \sup_{k \geq 0} 2^{kr} \norm{\psi_k(D) u}_{L^{\infty}} < \infty
$$
where $\psi_k(D)$ is a Littlewood-Paley partition of unity. For more details see~\cite{Taylor3}. By $W^{k,p}$ we denote the Sobolev space of functions whose weak partial derivatives up to order $k$ are in $L^p$. These spaces are well defined also on smooth manifolds.

The matrix of a Riemannian metric $g$ in local coordinates is $(g_{jk})$, its inverse matrix is $(g^{jk})$, and its determinant is $\abs{g}$. We use the Einstein summation convention where repeated indices in lower and upper position are summed.

\section{$p$-harmonic coordinates}

If $(M,g)$ is a Riemannian manifold, then the $p$-Laplace operator is given in local coordinates by 
\begin{equation}\label{p_harm_eq}
\delta(\abs{du}^{p-2} du) = -\abs{g}^{-1/2} \partial_j (\abs{g}^{1/2} g^{jk} (g^{ab} \partial_a u \partial_b u)^{\frac{p-2}{2}} \partial_k u).
\end{equation}
The main theorem of this section asserts that whenever $(M,g)$ is a Riemannian manifold with $C^r$ metric tensor and $1 < p < \infty$, then near any point of $M$ there exist local coordinates all of whose coordinate functions are $p$-harmonic.

\begin{thm}[$p$-harmonic coordinates]\label{p-harm-coord}
Let $(M,g)$ be a Riemannian manifold whose metric is of class $C^r$, $r>1$, in a local coordinate chart about some point $x_0 \in M$. Let also $1 < p < \infty$. There exists a local coordinate chart near $x_0$ whose coordinate functions are $p$-harmonic and have $C^{r+1}_*$ regularity, and given any $\eps > 0$, one can arrange so that metric satisfies $\abs{g_{jk}(x_0) - \delta_{jk}} <  \eps$ for $j,k=1,\ldots,n$. Moreover, all $p$-harmonic coordinates near $x_0$ have $C^{r+1}_*$ regularity.
\end{thm}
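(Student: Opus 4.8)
The plan is to construct $p$-harmonic coordinates by perturbing the Cartesian coordinates and solving Dirichlet problems, following the strategy used for harmonic coordinates ($p=2$) in \cite{TaylorToolsForPDE}. First I would work in the given $C^r$ chart about $x_0$, and by an affine change of coordinates arrange that $g_{jk}(x_0) = \delta_{jk}$, which immediately handles the normalization $\abs{g_{jk}(x_0)-\delta_{jk}} < \eps$ once one checks that the perturbation below does not destroy it; shrinking the chart to a small ball $B_\rho(x_0)$ makes the metric uniformly close to the Euclidean one. For each $m = 1, \ldots, n$ I would then seek a $p$-harmonic function $u^m$ on $B_\rho$ that is a small perturbation of the Cartesian coordinate function $x^m$, by solving the Dirichlet problem
\begin{equation*}
\delta(\abs{du^m}^{p-2}\, du^m) = 0 \text{ in } B_\rho, \qquad u^m = x^m \text{ on } \p B_\rho.
\end{equation*}
The existence of a weak solution $u^m \in W^{1,p}(B_\rho)$ follows from the direct method of the calculus of variations, since $u \mapsto \int_{B_\rho} \abs{du}^p$ is convex and coercive on the affine class $x^m + W^{1,p}_0(B_\rho)$.

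The key point is to show that the resulting map $u = (u^1, \ldots, u^n)$ is a genuine coordinate chart near $x_0$, i.e.\ that its differential $du(x_0)$ is invertible. Here I would use the a priori interior regularity for the $p$-Laplace equation: weak solutions are $C^{1+\a}_{\mathit{loc}}$ for some $\a > 0$, with estimates depending on the $C^r$ modulus of the metric. The strategy is a perturbation argument: since the metric is $C^r$-close to the Euclidean metric on a small ball, and since the linear functions $x^m$ are exactly $p$-harmonic for the Euclidean metric (as $\abs{dx^m}$ is constant, the equation reduces to $\Delta x^m = 0$), the solutions $u^m$ should be $C^1$-close to $x^m$. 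Making this quantitative is the main obstacle: one must show the $C^{1+\a}$ estimate is uniform as $\rho \to 0$ after rescaling, so that $\norm{du^m - dx^m}_{C^0(B_{\rho/2})}$ can be made arbitrarily small. The nonlinearity and degeneracy of the $p$-Laplacian at points where $du = 0$ means the standard linear Schauder theory does not apply directly, and one works with the weak form throughout; but because the $x^m$ have nonvanishing gradient, the equation is uniformly elliptic in a neighborhood of the relevant gradient range, which is what makes the comparison feasible. Once $du(x_0)$ is close to the identity and hence invertible, the inverse function theorem gives that $u$ is a diffeomorphism near $x_0$.

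For the regularity claims I would appeal to the sharp elliptic regularity for the (now nondegenerate, since $du \neq 0$) equation: in the constructed chart the gradients $du^m$ are bounded away from zero, so \eqref{p_harm_eq} is a nondegenerate quasilinear elliptic equation with $C^{r-1}$ coefficients, and Schauder-type estimates in Zygmund spaces upgrade the solution from $C^{1+\a}$ to $C^{r+1}_*$. The final sentence, that \emph{all} $p$-harmonic coordinates near $x_0$ enjoy $C^{r+1}_*$ regularity, follows by the same regularity argument applied to an arbitrary $p$-harmonic chart: any such chart has, by definition, nonvanishing coordinate gradients (since its differential is invertible), so the equation is again nondegenerate and the bootstrap to $C^{r+1}_*$ applies verbatim, independently of how the chart was obtained.
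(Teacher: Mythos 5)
Your construction follows the same overall route as the paper's (Theorem \ref{A-harm-coord}): solve Dirichlet problems with affine boundary data on small balls, show the Jacobian at the center point is close to the identity, apply the inverse function theorem, and then upgrade regularity via nondegenerate elliptic theory once the gradients are known to be nonvanishing. The gap is at the decisive step, which you yourself flag as ``the main obstacle'' and then dispose of in one sentence: the claim that ``because the $x^m$ have nonvanishing gradient, the equation is uniformly elliptic in a neighborhood of the relevant gradient range, which is what makes the comparison feasible'' is circular. Uniform ellipticity near the gradient range of $x^m$ only helps once you already know that $\nabla u^m$ stays near $e_m$; a priori the weak solution produced by the direct method could have vanishing or wildly varying gradient, and a uniform interior $C^{1+\alpha}$ bound (DiBenedetto's estimate, which holds for the degenerate equation) gives only boundedness and compactness of the rescaled family, not smallness of $\nabla u^m - e_m$. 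So as written, the invertibility of $DU(x_0)$ is asserted rather than proved.

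The paper closes exactly this gap with a monotonicity argument that works uniformly in the degenerate regime. After rescaling to $B_1$ (with $\mA_{\eps}(\tilde x,q) = \mA(\eps\tilde x,q)$), one tests the equation with $\tilde u - \tilde x^1 \in W^{1,p}_0(B_1)$; since $\tilde x^1$ solves the frozen-coefficient equation and $\abs{\mA_{\eps}(\tilde x,e_1) - \mA_0(\tilde x,e_1)} \lesssim \eps$ by Lipschitz continuity of $\mA$ in $x$, the strict monotonicity hypothesis yields the energy estimate $\norm{\nabla\tilde u - e_1}_{L^p(B_1)} \lesssim \eps^{\min\{1,\,1/(p-1)\}}$. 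This $L^p$-smallness is then combined with the uniform $C^{1+\alpha}$ bound through an elementary interpolation lemma (Lemma \ref{interpol}) to give $\norm{\nabla\tilde u - e_1}_{L^{\infty}(B_{1/8})} = o(1)$ as $\eps \to 0$, which is what makes $DU(x_0)$ invertible. (A compactness argument---uniform $C^{1+\alpha}$ bounds plus uniqueness for the limiting constant-coefficient Dirichlet problem---could substitute, but some such mechanism must be supplied.) Two smaller points: your final bootstrap needs the intermediate fact that the solution lies in $W^{2,2}_{loc}$ wherever $\nabla u \neq 0$ (proved in the paper by difference quotients, Lemma \ref{sobo22}) before the equation can be rewritten in nondivergence form and Schauder theory applied; and the Zygmund endpoint $C^{r+1}_*$ when $r$ is adjacent to an integer requires the specific result \cite[Theorem 14.4.3]{Taylor3}, not plain H\"older--Schauder estimates. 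Your handling of the normalization at $x_0$ and of the claim that all $p$-harmonic coordinates are $C^{r+1}_*$ is fine.
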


\begin{remark}
If $\Gamma_{ij}^k$ are the Christoffel symbols of the metric and $\Gamma^k=g^{ij}\Gamma_{ij}^k$, it is well known that the coordinate function $x^k$ is $2$-harmonic if and only if $\Gamma^k = 0$. More generally, we have that $x^k$ is $p$-harmonic if and only if
$$
\Gamma^k=\frac{p-2}{2}g^{ki}\p_i\log{g^{kk}}=(2-p)\frac{g^{ki}\Gamma_{ij}^k g^{jk}}{g^{kk}}.
$$
This follows by a straightforward calculation (cf.~\cite{Kazdan_Warner}) from the $p$-harmonic equation~\eqref{p_harm_eq} and the identity $\Gamma^k=-|g|^{-1/2}\p_i(|g|^{1/2}g^{ik})$.
\end{remark}

Since this theorem is a local statement, it is sufficient to consider a corresponding result in $\mR^n$. Locally, $p$-harmonic functions in $(M,g)$ are solutions of the $\mA$-harmonic equation $\mdiv \,\mA(x,\nabla u) = 0$ in a subset of $\mR^n$ where 
\begin{equation} \label{A_def}
\mA^j(x,q) = \abs{g(x)}^{1/2} g^{jk}(x) (g^{ab}(x) q_a q_b)^{\frac{p-2}{2}} q_k.
\end{equation}
In general, $\mA$-harmonic equations were considered in \cite{LU} as a class of quasilinear equations that includes the Euler-Lagrange equations for many variational problems. These equations appear in the theory of quasiregular maps \cite{HKM, Iwaniec}, and in fact the pullback of an $\mA$-harmonic function by a quasiregular map solves another equation of the same type \cite{HKM}. The main example is \eqref{A_def} related to some matrix $g$, but often this matrix is only measurable and it is technically convenient to consider general $\mA$ satisfying certain conditions (see for instance \cite{OP}).

We recall a few standard facts on the $\mA$-harmonic equation. We will later consider a family of equations $\mdiv \,\mA_{\eps}(x,\nabla u) = 0$ where $\mA_{\eps}$ satisfy uniform bounds with respect to $0 \leq \eps \leq 1$, and therefore we need to pay attention to the constants in norm estimates. The first fact concerns norm estimates for solutions of the Dirichlet problem.

\begin{prop} \label{Aharmonic_dirichlet}
Let $\Omega$ be a bounded open set in $\mR^n$ and let $1 < p < \infty$. Consider a function $\mA: \Omega \times \mR^n \to \mR^n$ such that for some $\alpha, \beta > 0$: 
\begin{gather*}
x \mapsto \mA(x,\xi) \text{ is measurable for all } \xi \in \mR^n, \label{aharm1} \\
\xi \mapsto \mA(x,\xi) \text{ is continuous for a.e. } x \in \mR^n, \label{aharm2} \\
\mA(x,\xi) \cdot \xi \geq \alpha \abs{\xi}^p \text{ for a.e.~$x \in \Omega$ and for all $\xi \in \mR^n$}, \label{aharm3} \\
\abs{\mA(x,\xi)} \leq \beta \abs{\xi}^{p-1} \text{ for a.e.~$x \in \Omega$ and for all $\xi \in \mR^n$}, \label{aharm4} \\
(\mA(x,\xi)-\mA(x,\zeta)) \cdot (\xi-\zeta) > 0 \ \ \text{for a.e.~$x \in \Omega$ and for $\xi, \zeta \in \mR^n, \xi \neq \zeta$.}
\end{gather*}
Given $f \in W^{1,p}(\Omega)$, there is $u \in W^{1,p}(\Omega)$ such that $\mdiv \mA(x,\nabla u) = 0$ in $\Omega$ with $u-f \in W^{1,p}_0(\Omega)$. Moreover, 
$$
\norm{u}_{W^{1,p}(\Omega)} \leq C \norm{f}_{W^{1,p}(\Omega)}
$$
where $C$ only depends on $\alpha$, $\beta$, $p$, and $\Omega$.
\end{prop}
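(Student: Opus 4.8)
The plan is to obtain the solution by the theory of monotone operators, since the five listed hypotheses on $\mA$ are precisely the Carath\'eodory, coercivity, growth, and strict monotonicity conditions under which the Browder--Minty theorem applies; note that a purely variational argument is unavailable here, as $\mA$ is not assumed to be a gradient field. First I would reduce to homogeneous boundary data by writing $u = f + v$ with $v \in W^{1,p}_0(\Omega)$, so that the weak formulation becomes
$$
\int_\Omega \mA(x,\nabla f + \nabla v) \cdot \nabla \vp \, dx = 0, \qquad \vp \in W^{1,p}_0(\Omega).
$$
Setting $\la Tv, \vp \ra := \int_\Omega \mA(x,\nabla f + \nabla v) \cdot \nabla \vp \, dx$ defines a map $T \colon W^{1,p}_0(\Omega) \to W^{-1,p'}(\Omega)$ into the dual space, and solving the Dirichlet problem amounts to finding $v$ with $Tv = 0$.

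Next I would verify the hypotheses of the Browder--Minty theorem for $T$ on the space $W^{1,p}_0(\Omega)$, which is reflexive and separable precisely because $1 < p < \infty$. Boundedness of $T$ follows from the growth bound $\abs{\mA(x,\xi)} \le \beta \abs{\xi}^{p-1}$ and H\"older's inequality; strict monotonicity of $T$ follows from that of $\mA$ combined with the Poincar\'e inequality, which forces $\nabla v_1 \neq \nabla v_2$ on a set of positive measure whenever $v_1 \neq v_2$ in $W^{1,p}_0(\Omega)$, thereby also giving uniqueness; coercivity, namely $\la Tv, v\ra / \norm{v}_{W^{1,p}_0} \to \infty$, follows from $\mA(x,\xi)\cdot\xi \ge \alpha\abs{\xi}^p$ after absorbing the lower-order cross terms produced by $\nabla f$. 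I expect the main obstacle to be the verification of hemicontinuity of $t \mapsto \la T(v+tw), \vp\ra$: this is where one must check that the Nemytskii-type superposition $v \mapsto \mA(\cdot, \nabla f + \nabla v)$ is well behaved under merely the Carath\'eodory and growth conditions, which I would handle by the dominated convergence theorem with the growth bound supplying an integrable majorant. Browder--Minty then yields a solution $v$, and hence $u = f+v$.

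For the norm estimate I would test the equation for $u$ with $\vp = u - f \in W^{1,p}_0(\Omega)$, obtaining
$$
\int_\Omega \mA(x,\nabla u)\cdot\nabla u \, dx = \int_\Omega \mA(x,\nabla u)\cdot\nabla f \, dx.
$$
Coercivity bounds the left-hand side below by $\alpha \norm{\nabla u}_{L^p}^p$, while the growth bound and H\"older bound the right-hand side above by $\beta \norm{\nabla u}_{L^p}^{p-1}\norm{\nabla f}_{L^p}$; dividing through gives $\norm{\nabla u}_{L^p} \le (\beta/\alpha)\norm{\nabla f}_{L^p}$. The full $W^{1,p}$ bound then follows by applying the Poincar\'e inequality on $\Omega$ to $u - f$ and the triangle inequality, so the resulting constant $C$ depends only on $\alpha$, $\beta$, $p$, and, through the Poincar\'e constant, on $\Omega$, exactly as asserted. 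This structure also makes clear why the estimate is uniform for a family $\mA_\eps$ obeying the same bounds with $\eps$-independent $\alpha, \beta$, which is the feature needed in the later perturbation argument.
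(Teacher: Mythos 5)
Your proposal is correct and follows essentially the same route as the paper: the paper obtains existence by citing a ``monotonicity argument'' from \cite[Appendix I]{HKM}, which is precisely the Browder--Minty monotone-operator argument you spell out (reduction to $v \in W^{1,p}_0(\Omega)$, boundedness, hemicontinuity, monotonicity, coercivity). Your derivation of the norm estimate --- testing with $u-f$, using coercivity, growth, H\"older, and then Poincar\'e --- is identical to the paper's.
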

\begin{proof}
The existence of $u$ for a given $f$ follows from a monotonicity argument \cite[Appendix I]{HKM}. For norm estimates, note that $u-f \in W^{1,p}_0(\Omega)$ may be used as a test function which implies 
$$
\int_{\Omega} \mA(x,\nabla u) \cdot \nabla(u-f) \,dx = 0.
$$
Using the assumptions on $\mA$ and H\"older's inequality we obtain 
\begin{align*}
\alpha \int_{\Omega} \abs{\nabla u}^p \,dx &\leq \int_{\Omega} \mA(x,\nabla u) \cdot \nabla u \,dx = \int_{\Omega} \mA(x,\nabla u) \cdot \nabla f \,dx \\
 &\leq \int_{\Omega} \beta \abs{\nabla u}^{p-1} \abs{\nabla f} \,dx \leq \beta \norm{\nabla u}_{L^p(\Omega)}^{p-1} \norm{\nabla f}_{L^p(\Omega)}.
\end{align*}
It follows that 
$$
\norm{\nabla u}_{L^p(\Omega)} \leq \frac{\beta}{\alpha} \norm{\nabla f}_{L^p(\Omega)}.
$$
Also, by the Poincar\'e inequality 
$$
\norm{u-f}_{L^p(\Omega)} \leq C_{p,\Omega} \norm{\nabla u - \nabla f}_{L^p(\Omega)} \leq C \norm{\nabla f}_{L^p(\Omega)}.
$$
The estimate $\norm{u}_{W^{1,p}} \leq C \norm{f}_{W^{1,p}(\Omega)}$ follows.
\end{proof}


The next result concerns higher regularity of $\mA$-harmonic functions, and requires further assumptions on $\mA$. We recall that even in Euclidean space, solutions of the $p$-Laplace equation always have $C^{1+\alpha}$ regularity for some $\alpha > 0$ but are not $C^2$ in general. However, if one knows a priori that the solution has nonvanishing gradient, then the solution is as regular as the function $\mA$ naturally allows. 
We write $B_r = B(0,r)$ for the open ball of radius $r > 0$ centered at the origin in $\mR^n$, and remark that the Zygmund space $C^r_*$ coincides with the H\"older space $C^r$ if $r$ is not an integer~\cite{Taylor3}.

\begin{prop} \label{Aharmonic_regularity}
Let $1 < p < \infty$ and let $\mA: B_1 \times (\mR^n \setminus \{0\}) \to\mR^n$ be a $C^1$ function such that for some $\delta, \beta > 0$, one has for $x \in B_1$ and $\xi, \zeta \in \mR^n$:
\begin{gather}\label{Aharmregass}
\abs{\mA(x,\xi)} + \abs{\partial_{x_j} \mA^k(x,\xi)}  + \abs{\xi} \,\abs{\partial_{\xi_j} \mA^k(x,\xi)} \leq \beta \abs{\xi}^{p-1}, \\
(\mA(x,\xi)-\mA(x,\zeta)) \cdot (\xi-\zeta) \geq \delta (\abs{\xi} + \abs{\zeta})^{p-2} \abs{\xi-\zeta}^2.
\end{gather}

If $u \in W^{1,p}(B_1)$ is a weak solution of $\mdiv \mA(x,\nabla u) = 0$ in $B_1$, then there are constants $C > 0$ and $\alpha \in (0,1)$ only depending on $n$, $p$, $\delta$, $\beta$, and $\norm{u}_{W^{1,p}(B_1)}$ such that $u$ is $C^{1+\a}$ regular in any subdomain of $B_1$ and
$$
\norm{u}_{C^{1,\alpha}(\overline{B}_{1/4})} \leq C.
$$
If additionally $\mA$ is $C^r$, $r > 1$, on $B_1\times \R^n \setminus \{0\}$ then $u$ is $C^{r+1}_*$ in any open set where $\nabla u$ is nonvanishing.
\end{prop}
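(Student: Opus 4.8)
The plan is to treat the two assertions separately: the $C^{1,\alpha}$ bound is the analytic core and is essentially quoted from the regularity theory for degenerate quasilinear equations, while the higher regularity on the set $\{\nabla u \neq 0\}$ is obtained by a Schauder bootstrap on a linearized equation.

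For the first assertion I would appeal to the interior $C^{1,\alpha}$ regularity theory for $\mA$-harmonic equations developed by Uhlenbeck, Tolksdorf and DiBenedetto (see \cite{HKM} for a textbook account). The growth bound and the strong monotonicity inequality in the hypotheses are precisely the structural conditions under which that theory yields: every weak solution $u \in W^{1,p}(B_1)$ has locally H\"older continuous gradient, with $\norm{u}_{C^{1,\alpha}(\overline{B}_{1/4})}$ controlled by $n$, $p$, $\delta$, $\beta$ and $\norm{u}_{W^{1,p}(B_1)}$. The only point requiring care is the quantitative dependence of $C$ and $\alpha$ on the listed quantities alone, so that the bound is uniform along the family $\mA_\eps$ considered later; this is read off from the De Giorgi--Nash--Moser iteration and the gradient oscillation estimate in the cited proofs, both of which use only the structural constants and the energy $\norm{\nabla u}_{L^p}$, the latter being controlled by $\norm{u}_{W^{1,p}(B_1)}$.

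For the second assertion, fix an open set $U \csubset B_1$ on which $\nabla u$ is nonvanishing; on any compact subset we then have $0 < c \leq \abs{\nabla u} \leq C'$. Differentiating the equation in a coordinate direction $x_s$ (rigorously, via difference quotients), the function $v = \partial_s u$ is a weak solution of the linear divergence-form equation $\partial_j(a^{jk}(x)\partial_k v) = -\partial_j b^j_s(x)$, where $a^{jk}(x) = \partial_{\xi_k}\mA^j(x,\nabla u(x))$ and $b^j_s(x) = (\partial_{x_s}\mA^j)(x,\nabla u(x))$. The coefficient matrix is uniformly elliptic on each compact subset of $U$: the bound $\abs{\partial_\xi \mA} \leq \beta\abs{\xi}^{p-2}$ coming from the growth hypothesis controls $a^{jk}$ from above, while letting $\zeta \to \xi$ in the monotonicity inequality gives $a^{jk}(x)\eta_j\eta_k \geq \delta 2^{p-2}\abs{\nabla u(x)}^{p-2}\abs{\eta}^2$, which is bounded below since $\abs{\nabla u} \geq c > 0$.

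It remains to run the bootstrap in the Zygmund scale. Since $\mA$ is $C^r$, the maps $\partial_\xi \mA$ and $\partial_x \mA$ are $C^{r-1}$ away from $\xi = 0$, so composing with $x \mapsto (x,\nabla u(x))$ shows that if $\nabla u \in C^s_*$ then $a^{jk}, b^j_s \in C^{\min(r-1,s)}_*$. Feeding this into the interior Schauder estimate for the linear equation in the scale $C^s_*$, which holds for all $s > 0$ including integers (see \cite{Taylor3}), upgrades each $v = \partial_s u$, and hence $u$, from $C^{1+s}_*$ to $C^{2+\min(r-1,s)}_*$. Starting from $u \in C^{1+\alpha}_*$ and iterating, the regularity index obeys the recursion $s \mapsto 1 + \min(r-1,s)$, which strictly increases until it reaches the fixed point $s = r$; thus the bootstrap terminates after finitely many steps with $u \in C^{r+1}_*$ on $U$. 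The main obstacle is the first assertion: the degeneracy (or singularity) of the equation at $\nabla u = 0$ makes the $C^{1,\alpha}$ theory genuinely delicate, and care is needed to verify that its constants depend only on the structural data and the energy, as claimed.
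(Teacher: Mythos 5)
Your proposal is correct, and its skeleton matches the paper's: first quote the interior $C^{1,\alpha}$ theory for equations with this structure (the paper does this via Serrin's $L^\infty$ estimate followed by DiBenedetto's gradient estimate, which is where the dependence on $\norm{u}_{W^{1,p}(B_1)}$ enters), then linearize on the set where $\nabla u \neq 0$ and run a Schauder bootstrap that terminates in the Zygmund class via \cite{Taylor3}. The one genuine difference is the form of the linearized equation. The paper applies the chain rule to $\partial_j\left[\mA^j(x,\nabla u)\right]=0$ and works with the \emph{nondivergence}-form equation $a^{jk}\partial_j\partial_k u=f$, with $a^{jk}=\partial_{\xi_k}\mA^j(x,\nabla u)$ and $f=-\partial_{x_j}\mA^j(x,\nabla u)$, satisfied by $u$ itself, bootstrapping $u$ directly with classical Schauder estimates and concluding with \cite[Theorem 14.4.3]{Taylor3}; you instead differentiate in a fixed direction $x_s$ and work with the \emph{divergence}-form equation $\partial_j(a^{jk}\partial_k v)=-\partial_j b^j_s$ for $v=\partial_s u$, which stays closer to the weak formulation but requires the corresponding divergence-form Schauder estimates in the $C^s_*$ scale, including at integer values. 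Both routes are legitimate, and your recursion $s\mapsto 1+\min(r-1,s)$ does reach the fixed point $r$ in finitely many steps, giving $u\in C^{r+1}_*$. In either route, however, the real work of the second assertion is the step you compress into the parenthesis ``(rigorously, via difference quotients)'': to justify differentiating the equation at all one needs $u\in W^{2,2}_{loc}$ on the set where $\nabla u\neq 0$, i.e.\ uniform $L^2$ bounds on difference quotients of $\nabla u$, obtained by testing with $\eta^2 \dh u$ and using that $\abs{\nabla u}$ is locally bounded away from $0$ and $\infty$ so that the ellipticity and growth bounds become uniform; this is precisely the paper's Lemma \ref{sobo22}, proved in the appendix exactly because no clean reference exists. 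Your proposal names the right tool but leaves that lemma unproved; with it supplied, the argument is complete.
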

\begin{proof}
If $p > n$, it follows from Sobolev embedding that 
$$
\norm{u}_{L^{\infty}(B_1)} \leq \norm{u}_{C^{1-n/p}(\overline{B}_1)} \leq C_{n,p} \norm{u}_{W^{1,p}(B_1)}.
$$
On the other hand, if $1 < p \leq n$ we have by~\cite[Thms. 1 and 2]{Serrin} and by the bounds $\mA(x,\xi) \cdot \xi \geq \delta \abs{\xi}^p$, $\abs{\mA(x,\xi)} \leq \beta \abs{\xi}^{p-1}$ that 
$$
\norm{u}_{L^{\infty}(B_{1/2})} \leq C_{n,p,\delta,\beta} \norm{u}_{L^p(B_1)}.
$$
The assumptions on $\mA$ also imply that for all $x \in B_1$ and $\xi \in \mR^n$,
\begin{gather}\label{Aharmonic_reg_assumptions}
\sum_{j,k=1}^n \partial_{\xi_j} \mA^k(x,\xi) h_j h_k \geq 2^{p-2} \delta \abs{\xi}^{p-2} \abs{h}^2 \text{ for all $h \in \mR^n$}.
\end{gather}
The $C^{1+\alpha}$ estimate is then a consequence of~\cite[Thm. 2]{DiBenedetto} (see also \cite{Manfredi} for a stronger result where $\mA$ is only assumed to be H\"older continuous).


Suppose then that $\mA$ is $C^r$ regular with $r > 1$, and that $\nabla u \neq 0$ in some open set $U\subset\subset B_1$. We first use Lemma \ref{sobo22} and observe that $u \in W^{2,2}_{loc}(U)$. Since $u$ is $\mA$-harmonic in $U$, we have in the weak sense 
\begin{equation*}
 \p_j \left[ \mA^j(x,\nabla u(x)) \right] = 0.
\end{equation*}
Since $\mA\in C^r$ and $\nabla u \in W^{1,2}(U)$, the chain rule for derivatives holds; see e.g.~\cite[Thm. 6.16]{Lieb}. We have
\begin{equation}\label{lin_elliptic}
 \p_{x_j}\mA^j(x,\nabla u(x))+\p_{\xi_k}\mA^j(x,\nabla u(x))\p_j\p_ku=0.
\end{equation}
By the assumption that $\nabla u\neq 0$ in $U$, in any subdomain $U_1 \subset \subset U$ we have $|\nabla u|\geq\eps>0$ for some positive $\eps$. The inequality~\eqref{Aharmonic_reg_assumptions}  yields in $U_1$ 
\begin{equation*}
 \p_{\xi_k}\mA^j(x,\nabla u(x))h_jh_k\geq 2^{p-2} \d |\nabla u(x)|^{p-2} |h|^2\geq 2^{p-2}\d \eps^{p-2} |h|^2.
\end{equation*}
This shows that the equation~\eqref{lin_elliptic} may be interpreted as a linear elliptic equation in nondivergence form. For clarity we write \eqref{lin_elliptic} as 
\begin{equation}\label{lin_elliptic2}
a^{jk}(x)\p_j\p_k u=f,
\end{equation}
where the coefficients $a^{jk}(x)$ and the function $f(x)$, defined by 
\begin{gather*}
 a^{jk}(x)=\p_{\xi_k}\mA^j(x,\nabla u(x)) \\
f(x)=-\p_{x_j}\mA^j(x,\nabla u(x)),
\end{gather*}
are in the H\"older class $C^\sigma$ with $\sigma=\min(\a,r-1)\in(0,1)$. Also, as mentioned $u\in W^{2,2}_{loc}(U)$. Thus by elliptic regularity $u\in C^{2+\sigma}$; see e.g.~\cite[Appx. J]{Besse}.

If $r$ is large we can bootstrap this argument to show higher regularity. Since $r-1 \geq \sigma$, we can write $r = k_0+1+\sigma+s$ where $k_0$ is a nonnegative integer and $0 \leq s < 1$. We will show that $u \in C^{k_0+2+\sigma}$. If $k_0 = 0$ this has already been proved. If $k_0 \geq 1$ note that $u \in C^{2+\sigma}$, so $a^{jk}$ and $f$ have $C^{1+\sigma}$ regularity and elliptic regularity (Schauder estimates) imply $u \in C^{3+\sigma}$. Continuing this, we obtain $u \in C^{k_0+2+\sigma}$, or equivalently $u \in C^{r+1-s}$. But $a^{jk}, f \in C^{r-1}$, so $u \in C^{r+1}_*$~\cite[Theorem 14.4.3]{Taylor3}.
\end{proof}

We are now ready to show the existence and regularity of $\mA$-harmonic coordinates.

\begin{thm}[$\mathcal{A}$-harmonic coordinates]\label{A-harm-coord}
Let $\Omega$ be an open set in $\mR^n$ and let $1 < p < \infty$. Consider a $C^1$ function $\mA: \Omega \times (\mR^n \times \{0\}) \to \mR^n$ that satisfies for $x \in \Omega$ and $\xi, \zeta \in \mR^n$,
\begin{gather}
\mA(x,t\xi) = t^{p-1} \mA(x,\xi) \text{ for $t \geq 0$}, \label{mA_assumptions1} \\
(\mA(x,\xi)-\mA(x,\zeta)) \cdot (\xi-\zeta) \geq \delta (\abs{\xi} + \abs{\zeta})^{p-2} \abs{\xi-\zeta}^2. \label{mA_assumptions2}
\end{gather}
Given any point $x_0 \in \Omega$, there is $C^1$ (in fact $C^{1+\a}$ for some $\a>0$) diffeomorphism $U$ from some neighborhood of $x_0$ onto an open set in $\mR^n$ such that all coordinate functions of $U$ are $\mA$-harmonic. 

Moreover, if $\mA \in C^r$ with $r > 1$, then $U$ is $C^{r+1}_*$ and any $C^1$ diffeomorphism whose coordinate functions are $\mA$-harmonic has this regularity. Given any invertible matrix $S$ and any $\eps > 0$, there is $U$ defined near $x_0$ such that 
$$
\norm{DU(x_0) - S} < \eps.
$$
\end{thm}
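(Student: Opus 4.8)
The plan is to construct the coordinate functions by solving Dirichlet problems with affine boundary data on small balls centered at $x_0$ and then rescaling to a fixed ball. Fix an invertible matrix $S$ (take $S=I$ for the first assertion) and, for $k=1,\dots,n$, let $\ell_k(x)=(S(x-x_0))_k$, so that $\nabla\ell_k$ is the $k$th row of $S$. Choose $\rho_0$ with $\overline{B(x_0,\rho_0)}\subset\Omega$. For $0<\rho\leq\rho_0$, Proposition~\ref{Aharmonic_dirichlet} produces $u^k\in W^{1,p}(B(x_0,\rho))$ solving $\mdiv\mA(x,\nabla u^k)=0$ with $u^k-\ell_k\in W^{1,p}_0(B(x_0,\rho))$; the coercivity and growth bounds needed there follow from the monotonicity~\eqref{mA_assumptions2} (with $\zeta=0$, using $\mA(x,0)=0$ from~\eqref{mA_assumptions1}) and from homogeneity together with the $C^1$ bound of $\mA$ on the compact set $\overline{B(x_0,\rho_0)}\times S^{n-1}$. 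Rescaling by $v^k_\rho(y)=\rho^{-1}u^k(x_0+\rho y)$ turns this into $\mdiv\mA_\rho(y,\nabla v^k_\rho)=0$ in $B_1$ with $v^k_\rho(y)=(Sy)_k$ on $\partial B_1$, where $\mA_\rho(y,\xi)=\mA(x_0+\rho y,\xi)$ and $\nabla_y v^k_\rho(0)=\nabla_x u^k(x_0)$. The point is that the affine function $\ell(y):=(Sy)_k$ is $\mA(x_0,\cdot)$-harmonic, since the frozen operator $\mA(x_0,\cdot)$ has $x$-independent coefficients, and that $\mA_\rho\to\mA(x_0,\cdot)$ as $\rho\to0$.

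The heart of the argument, and the main obstacle, is the perturbation estimate showing that $\nabla v^k_\rho(0)\to\nabla\ell$, the $k$th row of $S$, as $\rho\to0$. First I would establish $L^p$ convergence of the gradients by an energy comparison. Since $\ell$ is $\mA(x_0,\cdot)$-harmonic and $v^k_\rho-\ell\in W^{1,p}_0(B_1)$ may be used as a test function in both weak equations, subtracting gives
$$
\int_{B_1}\left(\mA_\rho(y,\nabla v^k_\rho)-\mA(x_0,\nabla\ell)\right)\cdot\nabla(v^k_\rho-\ell)\,dy=0.
$$
Splitting $\mA_\rho(y,\nabla v^k_\rho)-\mA(x_0,\nabla\ell)=[\mA_\rho(y,\nabla v^k_\rho)-\mA_\rho(y,\nabla\ell)]+[\mA_\rho(y,\nabla\ell)-\mA(x_0,\nabla\ell)]$, using monotonicity~\eqref{mA_assumptions2} on the first bracket and H\"older's inequality on the second, whose $L^{p'}$ size is controlled by $\eta(\rho)=\sup_{\abs{y}\leq1}\abs{\mA(x_0+\rho y,\nabla\ell)-\mA(x_0,\nabla\ell)}\to0$, yields
$$
\delta\int_{B_1}(\abs{\nabla v^k_\rho}+\abs{\nabla\ell})^{p-2}\abs{\nabla v^k_\rho-\nabla\ell}^2\,dy\leq C\,\eta(\rho)\,\norm{\nabla v^k_\rho-\nabla\ell}_{L^p(B_1)}.
$$
For $p\geq2$ the left side dominates $\delta\norm{\nabla v^k_\rho-\nabla\ell}_{L^p}^p$; for $1<p<2$ one writes $\abs{\nabla v^k_\rho-\nabla\ell}^p$ as $[(\abs{\nabla v^k_\rho}+\abs{\nabla\ell})^{p-2}\abs{\nabla v^k_\rho-\nabla\ell}^2]^{p/2}(\abs{\nabla v^k_\rho}+\abs{\nabla\ell})^{(2-p)p/2}$ and applies H\"older with exponents $2/p$ and $2/(2-p)$, the uniform $W^{1,p}$ bound from Proposition~\ref{Aharmonic_dirichlet} controlling the last factor. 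Either way $\norm{\nabla v^k_\rho-\nabla\ell}_{L^p(B_1)}\to0$.

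To pass from $L^p$ to pointwise convergence I would use the uniform interior regularity. The operators $\mA_\rho$ satisfy the hypotheses of Proposition~\ref{Aharmonic_regularity} with constants independent of $\rho$: homogeneity and the $C^1$ bound give~\eqref{Aharmregass} (the $x$-derivative term even carries a factor $\rho\leq1$ after rescaling), the monotonicity is inherited, and $\norm{v^k_\rho}_{W^{1,p}(B_1)}$ is uniformly bounded. Hence $\{\nabla v^k_\rho\}$ is bounded in $C^{0,\alpha}(\overline{B}_{1/4})$, so precompact in $C^0(\overline{B}_{1/4})$; any uniform subsequential limit must agree with the $L^p$ limit $\nabla\ell$, so in fact $\nabla v^k_\rho\to\nabla\ell$ uniformly on $\overline{B}_{1/4}$, giving $\nabla_x u^k(x_0)=\nabla_y v^k_\rho(0)\to\nabla\ell$. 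Writing $U=(u^1,\dots,u^n)$, this says $DU(x_0)\to S$ as $\rho\to0$, so for $\rho$ small $\norm{DU(x_0)-S}<\eps$ and $DU(x_0)$ is invertible; since $U$ is $C^{1,\alpha}$ by Proposition~\ref{Aharmonic_regularity}, the inverse function theorem makes it a local $C^{1,\alpha}$ diffeomorphism.

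Finally, the higher regularity follows immediately. Because $DU(x_0)$ is invertible, each $\nabla u^k(x_0)\neq0$, so by continuity $\nabla u^k$ is nonvanishing on a neighborhood of $x_0$; if $\mA\in C^r$ with $r>1$, the second part of Proposition~\ref{Aharmonic_regularity} then gives $u^k\in C^{r+1}_*$, whence $U\in C^{r+1}_*$. The same reasoning proves the last assertion: for any $C^1$ diffeomorphism with $\mA$-harmonic components, invertibility of the differential forces every component to have nonvanishing gradient, so Proposition~\ref{Aharmonic_regularity} again yields $C^{r+1}_*$ regularity.
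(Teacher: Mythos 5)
Your proof is correct and, in its core, identical to the paper's: affine Dirichlet data on shrinking balls, rescaling to $B_1$ so the operator becomes $\mA_\rho(y,\xi)=\mA(x_0+\rho y,\xi)$, and an energy comparison with the frozen-coefficient affine solution via the monotonicity hypothesis --- your subtraction of the two weak formulations is algebraically the same as the paper's bound on the integral $I$, including the separate treatment of $p\geq 2$ and $1<p<2$ --- followed by uniform interior $C^{1,\alpha}$ bounds, the inverse function theorem, and Proposition~\ref{Aharmonic_regularity} for the $C^{r+1}_*$ regularity. The one step you handle differently is the upgrade from $L^p$ to pointwise convergence of the gradients at the center: the paper proves the quantitative rate $\norm{\nabla\tilde u-\nabla\tilde u_0}_{L^p(B_1)}\lesssim\eps^{\min\{1,\frac{1}{p-1}\}}$ and feeds it into its interpolation lemma (Lemma~\ref{interpol}) to get an explicit $o(1)$ bound in $L^\infty$ on a smaller ball, whereas you use Arzel\`a--Ascoli compactness of $\{\nabla v^k_\rho\}$ in $C^0(\overline{B}_{1/4})$ together with uniqueness of the $L^p$ limit; your argument is softer but fully adequate for the qualitative conclusion, forfeiting only the explicit rate. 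Two minor remarks: you are more careful than the paper in deriving the hypotheses of Propositions~\ref{Aharmonic_dirichlet} and~\ref{Aharmonic_regularity} from homogeneity, monotonicity, and the $C^1$ bound on a compact set (a welcome addition), while you omit the paper's closing observation that the inverse $U^{-1}$ is also $C^{r+1}_*$ (via $D(U^{-1})=(DU\circ U^{-1})^{-1}$ and the algebra property of $C^r_*$) --- harmless for the statement as worded, which only claims the regularity of $U$ itself.
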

\begin{proof}
Assume first that $S$ is the identity matrix. We normalize matters so that $x_0 = 0$ and $B_1 \subset \Omega$, and consider the equation 
$$
\mdiv \mA(x,\nabla u) = 0 \quad \text{in $B_1$}.
$$
The $\mA$-harmonic coordinates will be obtained as the map $U = (u^1, \ldots, u^n)$, where each $u^j = u^j_{\eps}$ solves for $\eps$ small the Dirichlet problem 
$$
\mdiv \mA(x,\nabla u^j) = 0 \   \   \text{in } B_{\eps}, \quad u^j|_{\partial B_{\eps}} = x^j.
$$

The assumptions on $\mA$ imply that the conditions in Propositions \ref{Aharmonic_dirichlet} and \ref{Aharmonic_regularity} are fulfilled. Then for $0 < \eps \leq 1$, the Dirichlet problem above has a solution $u^j \in W^{1,p}(B_{\eps})$ and additionally $u^j$ has $C^{1+\alpha}$ regularity.  We will show that when $\eps$ is sufficiently small, the Jacobian matrix $DU = (\partial_j u^k)_{j,k=1}^n$ is invertible at the origin. This gives the existence of the required $C^1$ (in fact $C^{1+\alpha}$) diffeomorphism by the inverse function theorem. The additional $C^{r+1}_*$ regularity when $\mA \in C^r$ for any system of $\mA$-harmonic coordinates then follows from the previous proposition.

We will write $u = u^1$ below. Define the dilated coordinates $\tilde{x} = x/\eps$, and let $\tilde{u}(\tilde{x}) = \eps^{-1} u(\eps \tilde{x})$ and similarly for $\tilde{\varphi}$ where $\varphi \in C^{\infty}_c(B_{\eps})$ is a test function. We have $\nabla u(x) = (\nabla \tilde{u})(x/\eps)$, and 
$$
\int_{B_{\eps}} \mA(x,\nabla u) \cdot \nabla \varphi \,dx = 0 \Leftrightarrow \int_{B_1} \mA_{\eps}(\tilde{x}, \nabla \tilde{u}(\tilde{x})) \cdot \nabla \tilde{\varphi}(\tilde{x}) \,d\tilde{x} = 0
$$
where
$$
\mA_{\eps}^j(\tilde{x},\tilde{q}) = \mA^j(\eps \tilde{x}, \tilde{q}).
$$
Note that $\mA_{\eps}$ satisfies the assumptions of this theorem, and consequently of Propositions \ref{Aharmonic_dirichlet} and \ref{Aharmonic_regularity}, uniformly with respect to $0 < \eps \leq 1$. For instance 
\begin{equation} \label{aeps_strict_monotonicity}
(\mA_{\eps}(x,\xi)-\mA_{\eps}(x,\zeta)) \cdot (\xi-\zeta) \gtrsim (\abs{\xi} + \abs{\zeta})^{p-2} \abs{\xi-\zeta}^2
\end{equation}
where the implied constants are independent of $\eps$. The function $\tilde{u}$ solves 
$$
\mdiv \mA_{\eps}(\tilde{x},\nabla \tilde{u}) = 0 \ \ \text{ in } B_1, \quad \tilde{u}|_{\partial B_1} = \tilde{x}^1.
$$
In fact we can take $\tilde{u}$ to be a solution of this equation provided by Proposition \ref{Aharmonic_dirichlet}, satisfying 
\begin{gather*}
\norm{\tilde{u}}_{W^{1,p}(B_1)} \lesssim 1, \\
\norm{\tilde{u}}_{C^{1+\alpha}(\overline{B}_{1/4})} \lesssim 1.
\end{gather*}
The original function $u$ may be obtained from $\tilde{u}$ by scaling.

Write $\tilde{u} = \tilde{u}_0 + \tilde{u}_1$ where $\tilde{u}_0 = \tilde{x}^1$. We will prove that 
\begin{equation} \label{u1_estimate}
\int_{B_1} \abs{\nabla \tilde{u}_1}^p \,d\tilde{x} \lesssim \eps^{\min\{p,p'\}}
\end{equation}
where $1/p + 1/p' = 1$. Define 
$$
I = \int_{B_1} (\abs{\nabla \tilde{u}} + \abs{\nabla \tilde{u}_0})^{p-2} \abs{\nabla \tilde{u}_1}^2 \,d\tilde{x}.
$$
The claim \eqref{u1_estimate} will follow from the estimate 
\begin{equation} \label{I_estimate}
I \lesssim \eps \norm{\nabla \tilde{u}_1}_{L^p}.
\end{equation}
In fact, if $p \geq 2$ then 
\begin{align*}
\int_{B_1} \abs{\nabla \tilde{u}_1}^p \,d\tilde{x} = \int_{B_1} \abs{\nabla \tilde{u}_1}^{p-2} \abs{\nabla \tilde{u}_1}^2 \,d\tilde{x} \leq \int_{B_1} (\abs{\nabla \tilde{u}} + \abs{\nabla \tilde{u}_0})^{p-2} \abs{\nabla \tilde{u}_1}^2 \,d\tilde{x} = I
\end{align*}
and \eqref{I_estimate} implies \eqref{u1_estimate}. In the case $1 < p < 2$, the H\"older inequality implies 
\begin{align*}
\int_{B_1} \abs{\nabla \tilde{u}_1}^p \,d\tilde{x} &= \int_{B_1} (\abs{\nabla \tilde{u}} + \abs{\nabla \tilde{u}_0})^{-p(p-2)/2} (\abs{\nabla \tilde{u}} + \abs{\nabla \tilde{u}_0})^{p(p-2)/2} \abs{\nabla \tilde{u}_1}^p \,d\tilde{x} \\
&\leq \left( \int_{B_1} (\abs{\nabla \tilde{u}} + \abs{\nabla \tilde{u}_0})^p \,d\tilde{x} \right)^{\frac{2-p}{2}} I^{p/2} \lesssim I^{p/2}
\end{align*}
since $\norm{\tilde{u}}_{W^{1,p}(B_1)} \lesssim 1$. Again \eqref{I_estimate} implies \eqref{u1_estimate}.

To show \eqref{I_estimate} we use \eqref{aeps_strict_monotonicity} and compute 
\begin{align*}
I &= \int_{B_1} (\abs{\nabla \tilde{u}} + \abs{\nabla \tilde{u}_0})^{p-2} \abs{\nabla \tilde{u}_1}^2 \,d\tilde{x} \\
 &\lesssim \int_{B_1} (\mA_{\eps}(\tilde{x},\nabla \tilde{u}) - \mA_{\eps}(\tilde{x},\nabla \tilde{u}_0)) \cdot (\nabla \tilde{u} - \nabla \tilde{u}_0) \,d\tilde{x}.
\end{align*}
Since $\tilde{u}$ solves the $\mA_{\eps}$-harmonic equation and $\tilde{u}_0$ solves that $\mA_0$-harmonic equation (since $\nabla\tilde{u}_0$ is constant), and since $\tilde{u} - \tilde{u}_0 \in W^{1,p}_0(B_1)$ can be used as a test function, we obtain 
\begin{align*}
 &\int_{B_1} (\mA_{\eps}(\tilde{x},\nabla \tilde{u}) - \mA_{\eps}(\tilde{x},\nabla \tilde{u}_0)) \cdot (\nabla \tilde{u} - \nabla \tilde{u}_0) \,d\tilde{x} \\
 &= -\int_{B_1} \mA_{\eps}(\tilde{x},\nabla \tilde{u}_0) \cdot (\nabla \tilde{u} - \nabla \tilde{u}_0) \,d\tilde{x} \\
  &= -\int_{B_1} (\mA_{\eps}(\tilde{x},\nabla \tilde{u}_0) - \mA_0(\tilde{x},\nabla \tilde{u}_0)) \cdot (\nabla \tilde{u} - \nabla \tilde{u}_0) \,d\tilde{x} \\
 &\lesssim \norm{\mA_{\eps}(\,\cdot\,,\nabla \tilde{u}_0) - \mA_0(\,\cdot\,,\nabla \tilde{u}_0)}_{L^{p'}(B_1)} \norm{\nabla \tilde{u}_1}_{L^p(B_1)}.
\end{align*}
Here, since $\mA$ is Lipschitz continuous and since $\nabla \tilde{u}_0 =e_1$ is constant, we have 
$$
\abs{\mA_{\eps}(\tilde{x},\nabla \tilde{u}_0) - \mA_0(\tilde{x},\nabla \tilde{u}_0)} = \abs{\mA(\eps \tilde{x}, \nabla \tilde{u}_0) - \mA(0,\nabla \tilde{u}_0)} \lesssim \eps
$$
in $B_1$ and therefore 
$$
\norm{\mA_{\eps}(\,\cdot\,,\nabla \tilde{u}_0) - \mA_0(\,\cdot\,,\nabla \tilde{u}_0)}_{L^{p'}(B_1)} \lesssim \eps.
$$
The estimate \eqref{u1_estimate} follows.

Collecting the results so far, we know that
$$
\norm{\nabla \tilde{u} - \nabla \tilde{u}_0}_{L^p(B_1)} \lesssim \eps^{\min\{1,\frac{1}{p-1}\}}
$$
and 
$$
\norm{\nabla \tilde{u} - \nabla \tilde{u}_0}_{C^{\alpha}(\overline{B}_{1/4})} \lesssim 1.
$$
We ''interpolate'' these estimates by using Lemma~\ref{interpol}, which shows that 
\begin{equation*} \label{u_u0_w1inftyestimate}
\norm{\nabla \tilde{u} - \nabla \tilde{u}_0}_{L^{\infty}(B_{1/8})} = o(1) \quad \text{as } \eps \to 0.
\end{equation*}
Repeating this for all components of $\tilde{U} = (\tilde{u}^1, \ldots, \tilde{u}^n)$ implies that 
$$
\norm{D\tilde{U}(0) - Id} = o(1) \quad \text{as } \eps \to 0.
$$
Thus in particular $D\tilde{U}(0)$ is invertible for small $\eps$, so also $DU(0) = D\tilde{U}(0)$ is invertible (and arbitrarily close to the identity matrix) for small $\eps$.

The inverse function theorem shows that $U$ is a $C^1$ diffeomorphism near $0$, and also $U$ is $C^{r+1}_*$. To obtain this regularity also for the inverse, write
$$
D(U^{-1})= (DU\circ U^{-1})^{-1}.
$$
Since $DU\in C^{r}_*$ and $U^{-1}\in C^{r}$ (by the standard inverse function theorem and the fact that $C^{r+1}_*\subset C^r$), and since $C^r_*$ is an algebra with respect to the usual multiplication of functions, this shows that $U^{-1}\in C^{r+1}_*$.

The proof of the existence of $\mA$-harmonic coordinates is completed in the case where $S = I$. If $S$ is any invertible matrix, we can choose $u^j$ to be  solutions of 
$$
\mdiv \mA(x,\nabla u^j) = 0 \   \   \text{in } B_{\eps}, \quad u^j|_{\partial B_{\eps}} = (Sx)^j.
$$
Repeating the argument results in $\mA$-harmonic coordinates for which $DU(0)$ is arbitrarily close to the matrix $S$.
\end{proof}

The existence of $p$-harmonic coordinates is an immediate consequence.

\begin{proof}[Proof of Theorem \ref{p-harm-coord}]
As discussed above, in any local coordinate system where the metric has $C^r$ regularity, the $p$-harmonic equation is of the form $\mdiv\,\mA(x,\nabla u) = 0$ where $\mA$ is given by \eqref{A_def}, satisfies \eqref{mA_assumptions1} and \eqref{mA_assumptions2}, and has $C^r$ regularity. The existence and regularity of $p$-harmonic coordinates is a consequence of Theorem \ref{A-harm-coord}.

Suppose that the metric in original coordinates at $x_0$ is represented by the matrix $G_0$, and let $U$ be the corresponding $\mA$-harmonic coordinates. The metric in $p$-harmonic coordinates at $x_0$ is given by 
$$
DU(0)^{-t} G_0 DU(0)^{-1}.
$$
Choosing $U$ so that $DU(0)$ is very close to $G_0^{1/2}$, we can arrange that the metric in $p$-harmonic coordinates is arbitrarily close to identity at $x_0$.
\end{proof}

\begin{remark}
It is also possible to consider $\mA$-harmonic coordinates on any Riemannian manifold $(M,g)$. In this case $\mA$ is a bundle map $T^*M\to T^*M$ mapping each fiber $T_{x_0}^*M$ to itself, and the $\mA$-harmonic equation is defined naturally as
\begin{equation*}
 \d \mA(du)=0.
\end{equation*}
The conditions \eqref{mA_assumptions1} and \eqref{mA_assumptions2} are replaced by  
\begin{gather*}
\mA(t\xi) = t^{p-1} \mA(\xi) \text{ for $t \geq 0$}, \\
\langle \mA(\xi)-\mA(\zeta),\xi-\zeta \rangle \geq \delta (\abs{\xi} + \abs{\zeta})^{p-2} \abs{\xi-\zeta}^2,
\end{gather*}
where $\xi, \eta \in T^* M$ and $\langle \,\cdot\,,\,\cdot\, \rangle$ and $\lvert\,\cdot\,\rvert$ are the Riemannian inner product and norm. The existence of $\mA$-harmonic coordinates in this setting follows by a straightforward modification of the previous arguments. We omit the details.
\end{remark}


Standard harmonic coordinates have the property that the Riemannian metric has maximal regularity in these coordinates; there is no coordinate system where the coordinate representation of the Riemannian metric is more regular than in harmonic coordinates. The same is true for $\mathcal{A}$-harmonic coordinates. More generally, we have the analogue of~\cite[Corollary 1.4]{Kazdan_Warner} with exactly the same proof.


\begin{prop}\label{max_reg}
 Let a Riemannian metric $g$ be $C^r$ regular, $r>0$, in some local coordinates $\varphi$ near a point $x_0 \in M$. If a tensor field $T$ is of class $C^s$, $s\geq r$, in the $\varphi$ coordinates, then it is of class at least $C^r_*$ in any $\mA$-harmonic coordinates near $x_0$.
\end{prop}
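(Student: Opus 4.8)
The plan is to transfer everything to the fixed background chart $\varphi$ and to read off the regularity of $T$ in $\mA$-harmonic coordinates from the regularity of the coordinate transition map, together with the mapping properties of the Zygmund spaces $C^r_*$.

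First I would fix an arbitrary $\mA$-harmonic coordinate system $U=(u^1,\dots,u^n)$ near $x_0$ and set $\Phi = U \circ \varphi^{-1}$, a diffeomorphism between open subsets of $\mR^n$. Read in the $\varphi$ coordinates, the components of $\Phi$ are precisely the $\mA$-harmonic coordinate functions for the map $\mA$ associated with $g$, which is $C^r$; since $U$ is a diffeomorphism, each $\nabla u^k$ is nonvanishing, so Theorem \ref{A-harm-coord} (in its manifold form, cf.\ the preceding remark, or equivalently Theorem \ref{p-harm-coord}) gives $\Phi \in C^{r+1}_*$ and, as established in that proof, $\Phi^{-1} \in C^{r+1}_*$ as well. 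In particular the Jacobian entries $\partial_a \Phi^i$ and $\partial_j (\Phi^{-1})^b$ lie in $C^r_*$.

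Next I would invoke the tensor transformation law. If $T$ has components $T^{a_1\cdots}_{b_1\cdots}$ in the $\varphi$ coordinates, then its components in the $U$ coordinates are finite sums of products of the form
$$ \bigl(\partial \Phi\bigr)^{\cdots} \, \bigl(\partial \Phi^{-1}\bigr)^{\cdots} \, \bigl(T^{a_1\cdots}_{b_1\cdots} \circ \Phi^{-1}\bigr), $$
where each factor is, after composition with $\Phi^{-1}$ where necessary, a function on the $U$-coordinate domain. The regularity bookkeeping then runs as follows. The Jacobian factors are in $C^r_*$ by the previous step. The components $T^{a_1\cdots}_{b_1\cdots}$ are $C^s$ with $s \geq r$, hence (working locally, on precompact domains) lie in $C^r \subseteq C^r_*$; since the target regularity is only $C^r_*$ and the Jacobian factors are themselves only $C^r_*$, there is no gain from $s>r$, and it suffices to treat $T$ as a $C^r_*$ object. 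Precomposing such a $C^r_*$ function with the $C^{r+1}_*$ diffeomorphism $\Phi^{-1}$ again yields a $C^r_*$ function by the composition estimates for Zygmund spaces \cite{Taylor3}. Finally, $C^r_*$ is a multiplicative algebra (as already used in the proof of Theorem \ref{A-harm-coord}), so each product, and hence each component of $T$ in the $U$ coordinates, belongs to $C^r_*$. As $U$ was arbitrary, this proves the claim.

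The one point requiring genuine care is the composition step $C^r_* \circ C^{r+1}_* \subseteq C^r_*$. This is subtle precisely when $r$ is an integer, where $C^r_*$ is strictly larger than $C^r$ and naive H\"older estimates do not close; the extra derivative carried by the inner map $\Phi^{-1}$ (it is $C^{r+1}_*$ rather than merely $C^r_*$) is exactly what a Fa\`a di Bruno type (paracomposition) argument needs in order to preserve $C^r_*$ regularity, and for this one appeals to the composition theorems for Zygmund spaces rather than to direct H\"older bounds.
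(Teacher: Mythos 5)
Your proof is correct and follows essentially the same route as the paper's: invoke Theorem~\ref{A-harm-coord} to get $C^{r+1}_*$ regularity of the transition map and its inverse (using that $\varphi$ is a smooth chart of the $C^\infty$ manifold), then run the tensor transformation law through the algebra and composition properties of Zygmund spaces. The one place where you genuinely deviate is the step you yourself flag as delicate, and it is worth noting that this difficulty is self-inflicted. You discard the hypothesis that $T_\varphi$ is \emph{classically} $C^s$, $s \geq r$, reducing it to a $C^r_*$ object, and are then forced to prove the inclusion $C^r_* \circ C^{r+1}_* \subseteq C^r_*$ for a rough outer function --- a true but nontrivial paracomposition-type fact when $r$ is an integer, and one that \cite{Taylor3} does not state in the form you need. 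The paper's (terse) argument never meets this issue: writing the pullback under the single map $\Psi = \varphi \circ \psi^{-1} \in C^{r+1}_*$, the factors are $D\Psi \in C^r_*$, the matrix inverse $(D\Psi)^{-1}$ (which stays in $C^r_*$ by the algebra property and the reciprocal of the nonvanishing determinant, with no composition of rough functions), and $T_\varphi \circ \Psi$, whose outer function is classically $C^s$ with $s\geq r$; for that last composition the ordinary chain rule and H\"older estimates suffice, since $\Psi$ has continuous derivatives up to order $\lceil r \rceil$, giving $T_\varphi \circ \Psi \in C^r \subset C^r_*$ even at integer $r$. So there is no gap in your argument, but the ``one point requiring genuine care'' disappears if you keep the classical regularity of $T$ and route all contravariant factors through the matrix inverse of $D\Psi$ rather than through compositions of Jacobians with $\Phi^{-1}$.
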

\begin{proof}
The chart $\varphi$ is $C^{\infty}$ since $M$ is a $C^{\infty}$ manifold, and by Theorem~\ref{A-harm-coord} any $\mathcal{A}$-harmonic coordinates $\psi$ near $x_0$ are necessarily $C^{r+1}_*$ regular. The coordinate representations $T_{\varphi}$ and $T_{\psi}$ of $T$ in the two coordinate systems are related by the pullback $T_{\psi} = (\varphi^{-1} \circ \psi)^* T_{\varphi}$. Since the pullback of a tensor field involves taking first derivatives of the coordinate functions, $T$ is $C^r_*$ in the $\psi$-coordinates.
\end{proof}

In the next section we study the regularity of conformal mappings by using $n$-harmonic coordinates, that is, $p$-harmonic coordinates with $p = \dim(M)$. Recall that a Riemannian manifold is said to be locally conformally flat if near any point there is a smooth coordinate chart where
\begin{equation*}
g_{jk}=c\, \d_{jk}
\end{equation*}
for some positive function $c$. Any two-dimensional manifold is locally conformally flat due to the existence of isothermal coordinates. In dimensions $n \geq 3$, a necessary and sufficient condition for sufficiently regular Riemannian metrics to be locally conformally flat is that the Cotton tensor vanishes for $n=3$ and that the Weyl tensor vanishes for $n\geq 4$~\cite[Theorem 4.24]{Aubin}. 

By the following result, coordinate charts which satisfy $g_{jk} = c \delta_{jk}$ are necessarily $n$-harmonic. 
\begin{prop}\label{n-isothermal}
If $(M,g)$ is a Riemannian manifold, any coordinate chart in which $g_{jk}=c\,\d_{jk}$ is necessarily $n$-harmonic.



\end{prop}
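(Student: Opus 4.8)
The plan is to substitute the coordinate function $u = x^m$ directly into the local form of the $p$-Laplace operator \eqref{p_harm_eq} with $p = n = \dim(M)$, and to check that the resulting flux vector field is constant, so that its divergence vanishes. The whole point of the argument is that the conformal factor $c$ cancels exactly when $p = n$; this is the analytic manifestation of the conformal invariance of $n$-harmonicity highlighted in the introduction.

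Concretely, suppose $g_{jk} = c\,\delta_{jk}$ in the given chart. Then $g^{jk} = c^{-1}\delta^{jk}$, and $\abs{g} = c^n$ so that $\abs{g}^{1/2} = c^{n/2}$; moreover, for $u = x^m$ one has $\partial_k u = \delta_k^m$ together with $g^{ab}\partial_a u\,\partial_b u = g^{mm} = c^{-1}$. First I would plug these into the vector field
$$
\abs{g}^{1/2} g^{jk}(g^{ab}\partial_a u\,\partial_b u)^{\frac{p-2}{2}}\partial_k u
$$
appearing in \eqref{p_harm_eq}. With $p = n$ its $j$-th component collapses to $c^{n/2}\cdot c^{-1}\cdot (c^{-1})^{(n-2)/2}\,\delta^{jm}$, and the exponent of $c$ is $n/2 - 1 - (n-2)/2 = 0$, so the component equals the constant $\delta^{jm}$.

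Finally, since the flux is constant, its divergence $\partial_j \delta^{jm}$ is identically zero, whence $u = x^m$ satisfies the $n$-harmonic equation; as $m$ was arbitrary, the chart is $n$-harmonic. I would point out that no derivatives of $c$ ever enter, so the conclusion holds in the weak sense and requires no regularity of $c$ beyond that of a Riemannian metric. The only genuine content is the exponent cancellation above, which is specific to $p = n$: for any other $p$ the surviving factor $c^{(n-p)/2}$ would generally prevent the coordinate functions from being $p$-harmonic, and there is no real obstacle elsewhere. Alternatively, one could bypass the computation entirely by invoking the conformal invariance of $n$-harmonic functions, since the identity map is a conformal map from $(\Omega,\delta)$ to $(\Omega,c\,\delta)$ and each $x^m$ is trivially $n$-harmonic for the Euclidean metric, hence remains so for $c\,\delta$.
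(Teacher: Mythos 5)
Your proof is correct and is exactly the ``direct computation'' that the paper's proof invokes: substituting $u = x^m$ into \eqref{p_harm_eq} with $p=n$, observing that the conformal factor cancels ($c^{n/2-1-(n-2)/2}=c^0$) so the flux is the constant $\delta^{jm}$, and concluding in the weak sense. Your closing remark that no regularity of $c$ is needed also matches the paper's parenthetical observation that bounded measurable $c$ suffices.
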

\begin{proof}
If $(x^1,\ldots,x^n)$ is such a coordinate chart, it follows by a direct computation that $\delta(\abs{du}^{n-2} du) = 0$ for $u = x^l$ with any $l$. (It would enough to assume that $c$ is bounded and measurable.)
\end{proof}

\section{Regularity of $C^1$ conformal mappings}

The first application of $n$-harmonic coordinates is to the regularity of $C^1$ conformal mappings between manifolds with $C^r$ ($r > 1$) metric tensors. This follows from Theorem \ref{p-harm-coord} and the well-known fact that the pullback of an $n$-harmonic function by a conformal mapping remains $n$-harmonic. For $C^1$ mappings this fact is an easy consequence of the chain rule. In the next section we will consider the more general case of $1$-quasiregular maps that have only $W^{1,n}_{loc}$ regularity.

\begin{thm} \label{thm_c1_conformal_regularity}
Let $(U,g)$ and $(V,h)$ be Riemannian manifolds such that $g,h \in C^r$, $r > 1$. If $\phi: U \to V$ is a $C^1$ diffeomorphism that is conformal in the sense that $\phi^* h = cg$ for some continuous positive function $c$, then $\phi$ is a $C^{r+1}$ diffeomorphism from $U$ onto $V$.
\end{thm}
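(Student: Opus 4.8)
The plan is to move an $n$-harmonic coordinate chart from the target $(V,h)$ back to the source $(U,g)$ through $\phi$, and then to read the regularity of $\phi$ off the maximal regularity of $n$-harmonic coordinates supplied by Theorem~\ref{p-harm-coord} (applied with $p=n=\dim U=\dim V$). Since being $C^{r+1}$ is a local property and $\phi$ is already assumed to be a diffeomorphism, it is enough to show that $\phi$ is $C^{r+1}_*$ near an arbitrary point $x_0\in U$, the regularity of $\phi^{-1}$ following by symmetry. So I fix $x_0$, put $y_0=\phi(x_0)$, and use Theorem~\ref{p-harm-coord} to produce a system $\psi=(\psi^1,\dots,\psi^n)$ of $n$-harmonic coordinates on a neighborhood of $y_0$ in $(V,h)$; by that theorem $\psi$ is a $C^{r+1}_*$ diffeomorphism and $\psi^{-1}$ is $C^{r+1}_*$ as well.

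The heart of the argument is that $\psi\circ\phi=(\psi^1\circ\phi,\dots,\psi^n\circ\phi)$ is again a system of $n$-harmonic coordinates, now on $(U,g)$. Indeed, in dimension $n$ the $n$-energy $\int|du|^n\,dV$ is conformally invariant, because under $g\mapsto cg$ one has $|du|_{cg}^n=c^{-n/2}|du|_g^n$ and $dV_{cg}=c^{n/2}\,dV_g$, so the two conformal factors cancel. Since $\phi^*h=cg$, the map $\phi$ is an isometry from $(U,\phi^*h)$ to $(V,h)$; combining this with the conformal invariance shows that pullback by $\phi$ preserves the $n$-energy and hence sends $n$-harmonic functions on $(V,h)$ to $n$-harmonic functions on $(U,g)$. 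For the $C^1$ map $\phi$ this is simply the chain rule $d(\psi^j\circ\phi)=\phi^*(d\psi^j)$ substituted into the weak form of the $n$-Laplace equation. As $\phi$ is a $C^1$ diffeomorphism and $\psi$ is a chart, $\psi\circ\phi$ is a $C^1$ diffeomorphism, hence a genuine coordinate system whose coordinate functions are (weakly) $n$-harmonic. Because $g\in C^r$, the final assertion of Theorem~\ref{p-harm-coord} then forces $\psi\circ\phi\in C^{r+1}_*$.

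It remains to factor $\phi=\psi^{-1}\circ(\psi\circ\phi)$. Both $\psi^{-1}$ and $\psi\circ\phi$ lie in $C^{r+1}_*$ with $r+1>2$, so in particular both are $C^1$, and a composition of two such maps is again $C^{r+1}_*$; hence $\phi\in C^{r+1}_*$. Running the same argument for $\phi^{-1}$, which is a $C^1$ conformal diffeomorphism with $(\phi^{-1})^*g=(c\circ\phi^{-1})^{-1}h$, gives $\phi^{-1}\in C^{r+1}_*$, so $\phi$ is a $C^{r+1}_*$ diffeomorphism. Finally, when $r$ is not an integer $C^{r+1}_*=C^{r+1}$, which yields the stated conclusion.

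Given Theorem~\ref{p-harm-coord}, the proof has no deep obstacle; the single point that needs care is the conformal-invariance step, namely checking that $\psi^j\circ\phi$ satisfies the $n$-Laplace equation in the weak sense when $\phi$ is only $C^1$ (so that $\psi^j\circ\phi$ need not be $C^2$). This requires justifying the change of variables in the weak formulation and using that the non-integer power $|du|^{n-2}$ scales with exactly the conformal factor that is absorbed by the volume element---a cancellation special to the exponent $p=n$, which is precisely why $n$-harmonic, rather than $2$-harmonic, coordinates are the right tool. Everything else is a formal composition of maps whose regularity is dictated by Theorem~\ref{p-harm-coord}, together with the fact that Zygmund and H\"older spaces coincide away from the integers.
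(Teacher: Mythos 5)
Your strategy is the same as the paper's: pull back an $n$-harmonic coordinate chart $\psi$ on $(V,h)$ by $\phi$, check that $\psi\circ\phi$ is again an $n$-harmonic coordinate system on $(U,g)$ (your energy-invariance argument and the paper's computation with the conformal transformation law $\delta_{cg}\omega=c^{-n/2}\delta_g(c^{(n-2)/2}\omega)$ are two ways of saying the same thing, and for a $C^1$ diffeomorphism both are legitimate), invoke the last assertion of Theorem~\ref{p-harm-coord} to get $C^{r+1}_*$ regularity of both charts, and factor $\phi=\psi^{-1}\circ(\psi\circ\phi)$. Up to that point your argument is sound.

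However, there is a genuine gap at the very end: you conclude only that $\phi\in C^{r+1}_*$, and you convert this to $C^{r+1}$ only ``when $r$ is not an integer.'' The theorem is stated for all $r>1$, including integer values, and for integer $r$ the inclusion $C^{r+1}\subsetneq C^{r+1}_*$ is strict (already $C^1_*$, the Zygmund class, contains non-Lipschitz functions), so your argument does not prove the stated conclusion in that case. The paper closes exactly this case by a separate argument, following Calabi--Hartman and Taylor: since $\phi$ is already known to be $C^{r+1-\eps}$ for every $\eps>0$, hence at least $C^2$, the transformation law for Christoffel symbols
$$
\Gamma_{ij}^k\,\partial_k\phi^m-\tilde{\Gamma}_{kl}^m\,\partial_i\phi^k\,\partial_j\phi^l=\partial_i\partial_j\phi^m
$$
holds pointwise; its left-hand side involves only first derivatives of $\phi$ and the Christoffel symbols of $g$ and $h$, which are $C^{r-1}$, so $\partial_i\partial_j\phi^m\in C^{r-1}$ and therefore $\phi\in C^{r+1}$. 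You need this (or some equivalent bootstrapping step) to upgrade $C^{r+1}_*$ to $C^{r+1}$ when $r\in\mathbb{N}$; without it the proof establishes a strictly weaker statement.
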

\begin{proof}
Choose a point $x_0 \in U$, and let $v$ be $n$-harmonic coordinates near $\phi(x_0)$ in $(V,h)$ provided by Theorem \ref{p-harm-coord}. The map $v$ has $C^{r+1}_*$ regularity. Define $u = \phi^* v$, so that $u$ is a $C^1$ diffeomorphism from a neighborhood of $x_0$ into $\mR^n$.

We claim that all components $u^l$ of $u$ are $n$-harmonic with respect to the metric $g$. To see this, note that when $c$ is a positive function and $\omega$ is a $1$-form and all quantities are continuous, we have in the weak sense 
\begin{gather*}
\delta_{cg} \omega = c^{-n/2} \delta_g (c^{\frac{n-2}{2}} \omega).
\end{gather*}
Using the chain rule $du^l = d(\phi^* v^l) = \phi^* dv^l$, we compute 
\begin{align*}
\delta_g(\abs{du^l}_g^{n-2} du^l) &= \delta_{c^{-1} \phi^* h} ( \abs{\phi^* dv^l}_{c^{-1} \phi^* h}^{n-2} \phi^* dv^l ) \\
 &= c^{n/2} \delta_{\phi^* h} (c^{-\frac{n-2}{2}} ( c \langle \phi^* dv^l, \phi^* dv^l \rangle_{\phi^* h})^{\frac{n-2}{2}} \phi^* dv^l ) \\
 &= c^{n/2} \phi^* (\delta_h (\abs{dv^l}_h^{n-2} dv^l) ) = 0.
\end{align*}
Thus $u$ is an $n$-harmonic coordinate system near $x_0$, and by Theorem \ref{p-harm-coord} it has $C^{r+1}_*$ regularity.

If $r > 1$ is not an integer, we have already proved that $\phi = v^{-1} \circ u$ is a $C^{r+1}$ map near $x_0$. The case where $r$ is an integer is handled as in \cite{Calabi, Taylor} and is based on the following identity for Christoffel symbols $\Gamma_{jk}^l$ of $(U,g)$ and $\tilde{\Gamma}_{jk}^l$ of $(V,h)$ under changes of coordinates:
$$
\Gamma_{ij}^k \partial_{k} \phi^m - \tilde{\Gamma}_{kl}^m \partial_{i} \phi^k \partial_{j} \phi^l = \partial_{i} \partial_{j} \phi^m.
$$
In our case $\phi = v^{-1} \circ u$ is $C^{r+1-\eps}$ for any $\eps > 0$, so $\phi$ is at least $C^2$ and the identity is valid. This shows that the second derivatives of $\phi$ are $C^{r-1}$, and consequently $\phi$ is $C^{r+1}$.
\end{proof}

\section{Regularity of $1$-quasiregular mappings}



In this section we consider the case of $1$-quasiregular mappings between manifolds with $C^r$ metric tensors. The main result, Theorem~\ref{mainthm}, shows that such a mapping is always a $C^{r+1}$ local conformal diffeomorphism. In the formulation and in the proof of the theorem we use the theory of quasiregular mappings on $\R^n$ and on Riemannian manifolds. There are several ways to define quasiregular mappings on  manifolds \cite{HK, Iwaniec}, and we will follow the approach discussed in~\cite{Liimatainen}. Let us first recall the definition of quasiregular mappings on $\R^n$.
\begin{definition}
 A mapping $\phi:\Omega\to \R^n$ is said to be \emph{$K$-quasiregular} if it is of Sobolev class $\W(\Omega,\R^n)$, its Jacobian determinant has constant sign a.e. on $\Omega$ and if it satisfies the \emph{distortion inequality}
\begin{equation*}
\norm{D\phi}_{op}^n\leq K\, J_\phi \mbox{ a.e.}
\end{equation*}
If $\phi$ is in addition a homeomorphism, it is called \emph{$K$-quasiconformal}.
\end{definition}
In the definition $D\phi\in L^n_{loc}(\Omega,\R^{n\times n})$, called the weak differential, is the Jacobian matrix of $\phi$ consisting of the weak derivatives of the component functions $\phi^i$. The norm $\norm{\,\cdot\,}_{op}$ is the operator norm and $J_\phi = \det {D\phi} \in L^1_{loc}(\Omega)$ is the Jacobian determinant of $\phi$. 

It can be seen from the definition that a $1$-quasiregular mapping $\phi$ satisfies
\begin{equation*}
 D\phi^TD\phi=c\, I_{n\times n} \mbox{ a.e.}
\end{equation*}
Here $c$ is some a.e.~positive function. Thus $1$-quasiregular mappings are conformal mappings in this sense. 
For the theory of quasiregular and quasiconformal mappings on $\R^n$ we refer to~\cite{Iwaniec,Reshetnyak,Vaisala, Rickman}

Since we are considering mappings between Riemannian manifolds, it is natural to consider the Riemannian analogue of quasiregular mappings. For our purposes the natural class of mappings is the class of Riemannian quasiregular mappings introduced in~\cite{Liimatainen}. 
\begin{definition}\label{quasiregularity}
Let $\phi: (M,g)\rightarrow (N,h)$ be a localizable $\W(M,N)$ mapping between Riemannian manifolds with continuous Riemannian metrics. In this case, the mapping $\phi$ is said to be \emph{Riemannian $K$-quasiregular} if the Jacobian determinant of $\phi$ has locally constant sign and if it satisfies the distortion inequality
\begin{equation*}
 ||D\phi||_g^n\leq K\, \Detg{D\phi}{g} \mbox{ a.e.}
\end{equation*}
If the mapping $\phi$ is in addition a homeomorphism, it is called \emph{Riemannian $K$-quasiconformal}.
\end{definition}

Above $D\phi: TM\to TN \in L^n_{loc}$ denotes the weak differential of $\phi$. The (pointwise) norm $||D\phi||_g$ and the determinant $\Detg{D\phi}{g}$ of $D\phi$ are invariant quantities defined by the formulas 
\begin{align*}
 ||D\phi||_g& =\frac{1}{n^{1/2}}\tr{g^{-1}\phi^*h}^{1/2}, \\
  \Detg{D\phi}{g} &= \det{g^{-1}\phi^*h}^{1/2}.
\end{align*}
A mapping is localizable if for every $p\in M$ there is an open set $U$ containing $p$ and a coordinate neighborhood $V$ of $\phi(p)$ such that $\phi(U)\subset\subset V$. The condition of localizability is satisfied for example if $\phi$ is continuous. The Jacobian determinant of $\phi$ has locally constant sign if the Jacobian determinant of every coordinate representation of $\phi$ is either non-negative or non-positive a.e.

Riemannian quasiregular mappings share the same analytic properties as quasiregular mappings on $\R^n$, but also take the Riemannian geometry of the manifolds naturally into account. In the case that a mapping $\phi:(M,g)\to (N,h)$ is Riemannian $1$-quasiregular, the mapping satisfies a.e. the equation
\begin{equation*}
 \phi^*h=c\,g,
\end{equation*}
where the pullback is defined in terms of the weak differential, for some a.e. positive function $c$. Note also that a differentiable mapping satisfying this equation is $1$-quasiregular. For details of these statements and for further details of the definition of Riemannian quasiregular mappings see~\cite{Liimatainen} (see also~\cite{dissertation}). 

We record the basic properties of Riemannian quasiregular mappings in the following lemma. We omit the proof and refer to~\cite{Liimatainen, dissertation}.
\begin{thm}\label{rqr}
 Let a mapping $\phi:(M,g)\to (N,h)$ be Riemannian $K$-quasiregular and assume that $g$ and $h$ are continuous. Then the following hold:
\begin{enumerate}
\item The mapping $\phi$ is differentiable a.e., and at the points where the differential exists, it coincides with the weak differential. The mapping $\phi$ can be redefined on a set of measure zero to be continuous.
\item Let $u\in \W(N)$. Then $\phi^*u=u\circ \phi$ is of Sobolev class $\W(N)$ and $\phi^*u$ satisfies a.e. the chain rule of derivation:
\begin{equation*}\label{chain_rule}
 \p_i(u\circ\phi)=\p_a u|_\phi \p_i\phi^a \mbox{ i.e. }  d\phi^*u=\phi^*du.
\end{equation*}
Moreover, 
\begin{equation*}
|d(u\circ\phi)|^n_g\leq n^{n} K\, \Det{D\phi}\phi^*(|du|_h^n) \mbox{ a.e.}
\end{equation*}
\item If the mapping $\phi$ is non-constant, the Riemannian Jacobian determinant
\begin{equation*}
 \Det{D\phi}=\sqrt{\det{g^{-1}\phi^*h}}\in L^1_{loc}(M)
\end{equation*}
of $\phi$ is non-vanishing a.e.
\item If $\phi$ is in addition a homeomorphism, and thus $K$-quasiconformal, its inverse is $K^{n-1}$-quasiconformal. Also, integration by substitution is valid:
\begin{equation*}
 \int_M f\circ \phi\, \Det{D\phi}d\mu_g=\int_{N} f d\mu_h.
\end{equation*}
Here $f$ is any integrable function on $N$.
\end{enumerate}
\end{thm}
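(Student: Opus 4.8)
The plan is to reduce all four assertions to the Euclidean theory of quasiregular mappings, as developed in \cite{Reshetnyak, Rickman, Vaisala, Iwaniec}. The first step is to exploit localizability: given $p \in M$, choose a source chart on a set $U \ni p$ and a target chart on a set $V$ with $\phi(U) \csubset V$, and let $\tilde\phi$ denote the coordinate representation of $\phi$. Since $g$ and $h$ are continuous, on the relevant compact sets they are comparable to the Euclidean metric, $\lambda^{-1}\abs{\xi}^2 \leq g_{ij}\xi^i\xi^j \leq \lambda\abs{\xi}^2$ and similarly for $h$. A direct computation then shows that the invariant quantities $\norm{D\phi}_g$ and $\Detg{D\phi}{g}$ are comparable, with constants depending only on $\lambda$ and $n$, to the Euclidean operator norm and coordinate Jacobian of $\tilde\phi$, so that the Riemannian distortion inequality forces $\tilde\phi$ to be Euclidean $K'$-quasiregular for some $K'$ depending only on $K$, $n$, and the local ellipticity bounds. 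This reduction is the backbone for the first three assertions, the only care being to track how the distortion constant degrades under the metric bounds.

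From this reduction, items (1) and (3) follow from the corresponding Euclidean statements together with an invariance check. Reshetnyak's theory gives that a Euclidean quasiregular map is differentiable a.e. with its classical differential equal to its weak differential, admits a continuous representative, and, if non-constant, has a.e.\ non-vanishing Jacobian. All three properties are local and invariant under the $C^\infty$ transition maps of $M$ and $N$, and the Riemannian Jacobian $\Det{D\phi} = \sqrt{\det{g^{-1}\phi^*h}}$ differs from the absolute coordinate Jacobian of $\tilde\phi$ only by the positive continuous factor $(\abs{h}\circ\phi / \abs{g})^{1/2}$; hence the a.e.\ statements transfer verbatim to $\phi:(M,g) \to (N,h)$.

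For item (2) I would first establish that $u \circ \phi \in \W(M)$ together with the pointwise chain rule $\p_i(u\circ\phi) = (\p_a u)\!\circ\!\phi\,\p_i\phi^a$, working in charts and invoking the composition theorem for Sobolev functions with quasiregular maps. The pullback distortion inequality is then purely pointwise: at a point of differentiability one has $d(u\circ\phi) = \phi^* du$, so $\abs{d(u\circ\phi)}_g \leq \norm{D\phi}_{op}\,\abs{du}_h\!\circ\!\phi$, and since $\tr{g^{-1}\phi^*h}$ equals the sum of the squared singular values of $D\phi$ one gets $\norm{D\phi}_{op} \leq n^{1/2}\norm{D\phi}_g$. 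Combining this with $\norm{D\phi}_g^n \leq K\,\Det{D\phi}$ yields $\abs{d(u\circ\phi)}_g^n \leq n^{n/2} K\,\Det{D\phi}\,\phi^*(\abs{du}_h^n)$, and the stated constant $n^n$ is a safe over-estimate.

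Finally, item (4) uses the duality between inner and outer distortion of a quasiconformal homeomorphism: in coordinates the inverse of a $K$-quasiconformal map is $K^{n-1}$-quasiconformal, and this is again chart-invariant. The change-of-variables identity $\int_M f\circ\phi\,\Det{D\phi}\,d\mu_g = \int_N f\,d\mu_h$ follows from the area formula for the a.e.-differentiable homeomorphism $\phi$, which is licensed by Lusin's condition (N); one assembles the global identity from the coordinate version with a partition of unity, using that $d\mu_g$, $d\mu_h$, and $\Det{D\phi}$ are intrinsically defined. I expect the main obstacle to be precisely the composition and chain-rule step in (2): it requires that $\phi$ carry null sets to null sets (condition (N)), which is itself a nontrivial feature of quasiregular maps and is exactly what makes the a.e.\ chain rule legitimate. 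Granting the Euclidean theory, the remaining work is the bookkeeping needed to verify that each assertion is coordinate-invariant, so that the reduction to $\R^n$ is valid.
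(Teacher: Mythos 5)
The paper offers no proof to compare against here: Theorem \ref{rqr} is stated with the proof explicitly omitted, deferring entirely to \cite{Liimatainen}. Your chart-reduction strategy is precisely the route that reference takes, and it is also how the present paper uses the theorem: continuity of $g$ and $h$ makes the coordinate representation of $\phi$ Euclidean quasiregular with a distortion constant controlled by the local ellipticity bounds (this is exactly the content of Lemma \ref{1eps}, and it is invoked verbatim at the start of the proof of Theorem \ref{mainthm}), after which Reshetnyak's Euclidean theory \cite{Reshetnyak, Rickman} supplies the a.e.\ differentiability, continuous representative, nonvanishing Jacobian, inner/outer distortion duality, and change of variables. Your outline is correct; in particular your constant $n^{n/2}K$ in item (2) is sharper than the stated $n^{n}K$, so the claimed inequality follows.

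Two small refinements, neither a gap in substance. First, in item (2) the delicate point you flag should be stated as condition $(N^{-1})$ rather than condition (N): for $u\circ\phi$ to be a well-defined element of $W^{1,n}_{loc}$ independent of the chosen representative of $u$, and for the a.e.\ chain rule to make sense, one needs preimages of null sets to be null; this follows by combining the change-of-variables inequality (condition (N) with multiplicity) with the a.e.\ nonvanishing of $J_\phi$ from item (3), so the logical order matters. Second, in item (3) the passage from ``$\phi$ non-constant on $M$'' to ``the coordinate representation is non-constant in each chart'' needs the standard open-closed argument on the (connected) manifold: the set of points near which $\phi$ is constant is open, and it is closed because a quasiregular map that is constant on an open subset of a connected chart has vanishing Jacobian there and hence is constant on the whole chart. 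With these two points made explicit, your reduction is a faithful reconstruction of the argument the paper delegates to \cite{Liimatainen}.
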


Note that the lemma essentially states that all the standard formulas for derivatives and integration of smooth mappings continue to hold also for Riemannian quasiregular (or quasiconformal) mappings.

We will also use the following auxiliary result.
\begin{lemma}\label{1eps}
Let $\phi:(M,g)\to (N,h)$ be a Riemannian $1$-quasiregular mapping. Let $p\in M$. Given any $\eps > 0$, there is $\delta > 0$ such that if $U$ and $V$ are coordinate neighborhoods of $p$ and $\phi(p)$ with $\phi(U)\subset V$ and
$$
\norm{g_{jk}(p)-I_{n\times n}}_{op}\leq \delta, \qquad \norm{h_{jk}(\phi(p)) - I_{n \times n}}_{op} \leq \delta,
$$
then there are subdomains $U'\subset\subset U$ and $V'\subset\subset V$ such that the coordinate representation of $\phi:U'\to V'$ is an $(1+\e)$-quasiregular mapping on $\R^n$.
\end{lemma}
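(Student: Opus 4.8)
The plan is to reduce the assertion to a pointwise (almost everywhere) linear-algebra estimate on the Jacobian of the coordinate representation of $\phi$, using that Riemannian $1$-quasiregularity is \emph{exactly} conformality and that the Euclidean distortion is stable under conjugation by metrics close to the identity. First I would pass to good subdomains. Since $\phi$ is continuous (Theorem~\ref{rqr}(1)) with $\phi(p)\in V$, and $g,h$ are continuous, the hypotheses $\norm{g_{jk}(p)-I}_{op}\le\delta$ and $\norm{h_{jk}(\phi(p))-I}_{op}\le\delta$ let me choose a ball $V'\csubset V$ about $\phi(p)$ on which $\norm{h_{jk}-I}_{op}<2\delta$, and then a ball $U'\csubset U$ about $p$ on which $\norm{g_{jk}-I}_{op}<2\delta$ and $\phi(U')\subset V'$. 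Let $\hat\phi$ denote $\phi$ read in the two charts; it is an $\R^n$-valued $\W$ map on $U'$, differentiable a.e.\ with Jacobian $A(x)=D\hat\phi(x)$, and by the constant-sign hypothesis (composing the target chart with a reflection if needed, which leaves $\norm{h_{jk}-I}_{op}$ unchanged) we may assume $\det{A}\ge 0$ a.e. At a.e.\ $x\in U'$ the symmetric positive definite matrices $g=g(x)$ and $\tilde h=h(\hat\phi(x))$ then have all eigenvalues in $[1-2\delta,1+2\delta]$.

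The core is a pointwise estimate at such an $x$. Since $\phi$ is Riemannian $1$-quasiregular it satisfies $\phi^*h=c\,g$ a.e., which in coordinates reads $A^T\tilde h A=c\,g$; equivalently $B:=\tilde h^{1/2}Ag^{-1/2}$ satisfies $B^TB=c\,I$, so $B=\sqrt{c}\,Q$ with $Q$ orthogonal and hence $\norm{B}_{op}^n=\abs{\det{B}}=c^{n/2}$, i.e.\ the Euclidean distortion of $B$ is exactly $1$. Writing $A=\tilde h^{-1/2}Bg^{1/2}$ and using submultiplicativity of $\norm{\,\cdot\,}_{op}$ together with multiplicativity of the determinant,
$$
\frac{\norm{A}_{op}^n}{\abs{\det{A}}}\le\frac{\norm{B}_{op}^n}{\abs{\det{B}}}\cdot\frac{\norm{\tilde h^{-1/2}}_{op}^n\,\norm{g^{1/2}}_{op}^n}{\det{\tilde h}^{-1/2}\,\det{g}^{1/2}}\le\frac{(1+2\delta)^n}{(1-2\delta)^n},
$$
where the last step bounds the four scalar factors by the extreme eigenvalues of $g$ and $\tilde h$.

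Finally I would choose $\delta$. As $((1+2\delta)/(1-2\delta))^n\to 1$ when $\delta\to 0^+$, fix $\delta>0$, depending only on $n$ and $\eps$, so small that this ratio is $\le 1+\eps$. Then $\norm{A}_{op}^n\le(1+\eps)\det{A}$ a.e.\ on $U'$; together with $\hat\phi\in\W$ and $\det{A}$ of constant sign, this says exactly that $\hat\phi\colon U'\to V'$ is $(1+\eps)$-quasiregular on $\R^n$. I expect the main obstacle to be the second paragraph: one must use the genuine conformality of $B$, not merely the Riemannian distortion bound $\le 1$, since the latter combined with $\norm{B}_{op}^2\le\tr{B^TB}$ would only yield the useless constant $n^{n/2}$. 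The point is that conformality forces the distortion of $B$ to be exactly $1$, so that all the loss is concentrated in the harmless metric factors. The remaining issues---the continuity bookkeeping for the subdomains and the orientation convention---are routine.
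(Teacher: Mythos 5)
Your proof is correct and takes essentially the same route as the paper's: both arguments shrink $U$ and $V$ by continuity so that $g$ and $h$ are uniformly close to the identity, then use the a.e.\ pointwise relation $A^T h(\phi)\, A = c\,g$ to bound the Euclidean distortion $\norm{A}_{op}^n/\abs{\det{A}}$ by factors of the form $\norm{g}_{op}^{n/2}\norm{h^{-1}}_{op}^{n/2}/(\det{g}^{1/2}\det{h^{-1}}^{1/2})$, which tend to $1$ as $\delta \to 0$. Your factorization $A=\tilde h^{-1/2}Bg^{1/2}$ with $B$ exactly conformal is just a repackaging of the paper's direct estimate of $\abs{(D\phi)X}$ combined with the determinant identity $J_\phi = c^{n/2}\det{h^{-1}(\phi)}^{1/2}\det{g}^{1/2}$; your extra care about orientation and about arranging $\phi(U')\subset V'$ addresses details the paper leaves implicit.
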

\begin{proof}
We will use local coordinate expressions for $g$, $h$, and $\phi$. If $\delta$ is chosen sufficiently small, there are subdomains $U'$ and $V'$ such that 
\begin{equation}
 \frac{||g||_{op}^n}{\det{g}} \leq (1+\epsilon)^{1/2}    \mbox{ and } \frac{||h^{-1}||_{op}^n}{\det{h^{-1}}}\leq (1+\epsilon)^{1/2} 
\end{equation}
on $U'$ and $V'$ respectively. Using the fact that
\begin{equation}
 \phi^*h=cg \mbox{ a.e.}
\end{equation}
we have a.e.
\begin{align*}
\abs{(D\phi) X}^2&=\abs{h(\phi)^{-1/2} h(\phi)^{1/2} (D\phi)X}^2 \leq ||h(\phi)^{-1/2}||_{op}^2 \langle h(\phi)(D\phi) X, (D\phi) X \rangle \\
 &\leq \norm{h(\phi)^{-1}}_{op} \langle cgX, X \rangle \leq c \norm{h(\phi)^{-1}}_{op} \norm{g}_{op} |X|^2.
\end{align*}
We also have $\det{D\phi}=c^{n/2}\det{h^{-1}(\phi)}^{1/2} \det{g}^{1/2}$ a.e. yielding
\begin{align*}
 \frac{||D\phi||_{op}^n}{J_\phi}\leq \frac{c^{n/2} \norm{h^{-1}(\phi)}_{op}^{n/2} \norm{g}_{op}^{n/2}}{c^{n/2}\det{h^{-1}(\phi)}^{1/2} \det{g}^{1/2}}\leq 1+\epsilon \mbox{ a.e.}
\end{align*}

\end{proof}

We continue with the following proposition, which states that the composition of an $n$-harmonic function and a conformal mapping gives an $n$-harmonic function. The proof is a straightforward calculation that uses the weak formulation of $n$-harmonicity, but the proof is somewhat lengthy due to the weak regularity assumptions. 

\begin{prop}\label{pullback_of_nh}
Let $\phi:(M,g)\rightarrow (N,h)$ be Riemannian $1$-quasiconformal and assume that the Riemannian metrics $g$ and $h$ are of class $C^r$, $r>1$. In this case, the pullback $\phi^*v=v\circ\phi$ of any $n$-harmonic function $v$ on $N$ is $n$-harmonic.
\end{prop}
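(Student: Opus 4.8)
The plan is to verify $n$-harmonicity of $\phi^*v$ directly from the weak formulation, namely that for every test function $\varphi \in C^\infty_c(M)$ one has $\int_M \langle |d(\phi^*v)|_g^{n-2} d(\phi^*v), d\varphi\rangle_g \, d\mu_g = 0$. The key structural fact to exploit is the conformal invariance of the $n$-Laplacian, which in the smooth case was recorded in the proof of Theorem~\ref{thm_c1_conformal_regularity} via the identity $\delta_{cg}\omega = c^{-n/2}\delta_g(c^{\frac{n-2}{2}}\omega)$; the difficulty here is purely that $\phi$ is only $\W$-regular, so I cannot pull back the weak equation for $v$ naively and must instead manipulate the integral identities provided by Theorem~\ref{rqr}.

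First I would reduce to a local statement, using localizability to work in coordinate charts $U$ near a point of $M$ and $V$ near its image, where $g$, $h$, and $\phi$ all have coordinate representations; since $n$-harmonicity is a local property it suffices to test against $\varphi$ supported in such a $U$. The central computation is to rewrite the integrand $|d(\phi^*v)|_g^{n-2}\langle d(\phi^*v), d\varphi\rangle_g$ using the chain rule $d(\phi^*v) = \phi^* dv$ from Theorem~\ref{rqr}(2) together with the conformality relation $\phi^*h = cg$. The conformal scaling is engineered so that the factors of $c$ cancel in dimension exactly $n$: schematically, $|d(\phi^*v)|_g^{n-2}$ carries a factor $c^{-(n-2)/2}$ relative to the $\phi^*h$-norm, and the volume element $d\mu_g$ versus $d\mu_h$ contributes the reciprocal power, leaving an expression that is the exact pullback under $\phi$ of the integrand defining $n$-harmonicity of $v$. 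I would then use the change of variables formula for quasiconformal maps, Theorem~\ref{rqr}(4), to transport the resulting integral from $U \subset M$ to $\phi(U) \subset N$, converting $\int_M (\cdots)\, d\mu_g$ into $\int_N (\cdots)\, d\mu_h$.

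After this transport, the integral should become $\int_N \langle |dv|_h^{n-2} dv, d\psi\rangle_h \, d\mu_h$ where $\psi$ is the pushforward of the test function $\varphi$ by $\phi$, i.e. $\varphi = \phi^*\psi = \psi \circ \phi$. Here I invoke the $n$-harmonicity of $v$ on $N$ to conclude this vanishes. The step requiring genuine care is justifying that $\psi = (\phi^{-1})^*\varphi$ is an admissible test function: since $\phi$ is $1$-quasiconformal its inverse is also quasiconformal (Theorem~\ref{rqr}(4)), so $\psi \in \W_{loc}(N)$ with compact support, and one must check $\psi$ lies in the correct space $W^{1,n}_0$ to serve as a test function in the weak $n$-harmonic equation for $v$ — an approximation argument densifying smooth test functions may be needed rather than assuming $\psi$ is itself smooth.

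I expect the main obstacle to be the rigorous bookkeeping of the weak derivatives under composition, precisely because $\phi$ is not $C^1$. All the pointwise algebraic identities (conformal scaling of norms, the cancellation of powers of $c$, the Jacobian factor matching) hold a.e.\ by the a.e.\ differentiability in Theorem~\ref{rqr}(1), but one must confirm that $|d(\phi^*v)|_g^{n-2} d(\phi^*v) \in L^{n'}_{loc}$ so that the pairing against $d\varphi$ is integrable, and that the change of variables in Theorem~\ref{rqr}(4) applies to the specific (merely $L^1$, a priori) integrand at hand. The $C^r$ hypothesis on the metrics with $r>1$ and the a.e.\ validity of the chain rule are exactly what make these integrability and measurability points go through; I would handle them by noting that $dv$ is continuous (indeed $C^{r}_*$ by Theorem~\ref{p-harm-coord}-type regularity, since $v$ is $n$-harmonic on $N$) so $\phi^*dv$ is a bounded measurable form on compact sets, keeping every integrand controlled.
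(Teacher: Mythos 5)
Your proposal is correct and follows essentially the same route as the paper's proof: both verify the weak formulation by transporting the integral to $N$ via the chain rule and change-of-variables formula of Theorem~\ref{rqr} together with the a.e.\ conformal relation $\phi^*h = c\,g$, whose powers of $c$ cancel exactly because the exponent is $n$, and both resolve the test-function mismatch by approximating $\varphi\circ\phi^{-1}\in W^{1,n}_0(N)$ by $C^\infty_c(N)$ functions. The paper merely runs the density step in the opposite direction (it first reduces to test functions of the form $f\circ\phi$ with $f\in C^\infty_c(N)$, bounding the error on $M$ via the distortion inequality, then computes with smooth $f$), and one small correction to your schematic bookkeeping: $\lvert d(\phi^*v)\rvert_g^{n-2}$ carries the factor $c^{(n-2)/2}$ (not $c^{-(n-2)/2}$), the inner product contributes $c$, and the volume element $c^{-n/2}$, which is how the exponents sum to zero.
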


\begin{proof}
Let $v$ be an $n$-harmonic function on $N$. We write for simplicity $u=v\circ\phi$ and show that $u$ is $n$-harmonic. The function $v$ is at least $C^1$ by Proposition~\ref{Aharmonic_regularity} and thus $u\in \W(M)$ by Theorem~\ref{rqr}. The function $u$ is $n$-harmonic if the functional $F$, 
\begin{equation}\label{weak_composite_form}
 F(w):=\int_M g(|du|_g^{n-2}du,dw) \,dV_g,
\end{equation}
vanishes on the space $C^{\infty}_c(M)$ of smooth functions with compact support in $M$.

It is sufficient to show that $F$ vanishes for all functions of the form $f\circ \phi$, $f\in C_c^\infty(N)$. To see this, let $w\in C^{\infty}_c(M)$, $K=\supp{w}$, and assume that $F$ vanishes for all functions of the form $f\circ\phi$, $f\in C_c^\infty(N)$. The function $k:=w\circ\phi^{-1}$ is of Sobolev class $W^{1,n}_0(N)$ by Theorem~\ref{rqr} and by the fact that $k$ is supported in the compact set $\phi(K)$.

Let us approximate $k$ by $C_c^\infty(N)$ functions $k_i\to k$ on $W^{1,n}(N)$. By H\"older's inequality and the fact that by our assumption $F(k_i\circ\phi)=0$, we have
\begin{align*}
 |F(w)|&\leq |F((k_i-k)\circ\phi)|+|F(k_i\circ\phi)| \\
  &\leq ||du||_{L^n(M)}^{\frac{n-1}{n}}\l(\int_M |d((k_i-k)\circ\phi)|_g^n \,dV_g\r)^{1/n}.
\end{align*}
Moreover, the following calculation is valid by results of Theorem~\ref{rqr},
\begin{align*}
 \int_M |d((k_i-k)&\circ\phi)|^n_gdV_g =\int_M \frac{|d((k_i-k)\circ\phi)|^n_g}{\Det{D\phi}}\, \Det{D\phi} \,dV_g \\
  & \leq n^{n} \int_M (|d(k_i-k)|_h^n)\circ\phi\, \Det{D\phi} \,dV_g \\
  &= n^{n}\int_{\phi(M)} |d(k_i-k)|_h^n \,dV_h  \leq n^{n} ||k_i-k||_{W^{1,n}(N)}^n.
\end{align*}
Since we have $||k_i-k||_{W^{1,n}(N)}^n\to 0$, $i\to\infty$, it follows that $F(w)=0$.

We use results of Theorem~\ref{rqr} again to evaluate~\eqref{weak_composite_form} for $w=f\circ \phi$, where $f\in C_c^\infty(N)$.  We have
\begin{align}\label{neval}
  \notag \int_M &g(|du|_g^{n-2}du,d(f\circ\phi))dV_g = \int_M g(|du|_g^{n-2}du,d(f\circ\phi)) \frac{\Det{D\phi}}{\Det{D\phi}}dV_g \\ 
  &= \int_{\phi(M)} |du|_g^{n-2}|_{\phi^{-1}} g(du,d(f\circ\phi))|_{\phi^{-1}} \frac{1}{\Det{D\phi}|_{\phi^{-1}}} dV_h.
\end{align}
By the remarks about Riemannian $1$-quasiregular mappings, $\phi$ satisfies
\begin{equation}\label{conf}
 \phi^*h=cg \mbox{ a.e.}
\end{equation}
Therefore the Jacobian determinant $\Det{D\phi}$ equals $c^{n/2}$. By Theorem~\ref{rqr}, the Jacobian matrix $D\phi$ of $\phi$ is invertible a.e. and the chain rule for derivatives is valid a.e. yielding
\begin{align*}
 g(du,d(f\circ\phi))&=g^{ij} \p_i (v\circ\phi) \p_j(f\circ\phi) = (\p_i\phi^a)\, g^{ij}\, (\p_j \phi^b)\, (\p_a v|_\phi\, \p_b f|_\phi) \\
  & =  c\,(D\phi)_i^a  (D\phi^T h|_\phi  D\phi)^{ij} (D\phi)_j^b\, (\p_a v|_\phi\, \p_b f|_\phi)  \\
  &=c\, (h^{ab} \p_a v\, \p_b f)\circ\phi  = c\, (h(dv,df))\circ\phi.
\end{align*}
Here we have also used~\eqref{conf} in the third equality. By a similar calculation we get an analogous expression for $g(du,du)$. Thus, the right hand side of~\eqref{neval} equals
\begin{equation*}
 \int_{\phi(M)} c|_{\phi^{-1}}^{(n-2)/2+1-n/2}|dv|_h^{n-2} h(dv,df)dV_h=\int_{\phi(M)} |dv|_h^{n-2} h(dv,df)dV_h=0.
\end{equation*}
We conclude that $u$ is $n$-harmonic.
\end{proof}

Our main theorem is a characterization of conformal mappings between Riemannian manifolds. The characterization is slightly different when $n=2$ and when $n\geq 3$. We begin with the latter case.
\begin{thm}\label{mainthm}
Let $(M,g)$ and $(N,h)$ be Riemannian manifolds, $n\geq 3$, with $g, h\in C^r$, $r>1$. Let $\phi: M\rightarrow N$ be a non-constant mapping. Then the following are equivalent:
\begin{align}
 &\phi  \mbox{ is a Riemannian $1$-quasiregular mapping}, \label{1qc_anal} \\
 &\phi  \mbox{ is locally bi-Lipschitz and } \phi^*h=c\,g \mbox{ a.e.,}  \label{weak_form} \\ 
 &\phi  \mbox{ is a local $C^1$ diffeomorphism and } \phi^*h=c\,g, \label{1dif} \\
 &\phi  \mbox{ is a local $C^{r+1}$ diffeomorphism and } \phi^*h=c\,g. \label{rdif}
\end{align}
\end{thm}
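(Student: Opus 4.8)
The plan is to prove the chain of equivalences by establishing a cycle of implications, using the earlier machinery. The most natural ordering is \eqref{rdif} $\Rightarrow$ \eqref{1dif} $\Rightarrow$ \eqref{weak_form} $\Rightarrow$ \eqref{1qc_anal} $\Rightarrow$ \eqref{rdif}, since the first three implications are essentially definitional or soft, and the last one carries all the analytic weight. For \eqref{rdif} $\Rightarrow$ \eqref{1dif} nothing is needed since $C^{r+1} \subset C^1$. For \eqref{1dif} $\Rightarrow$ \eqref{weak_form}, a $C^1$ local diffeomorphism is automatically locally bi-Lipschitz (its differential is continuous and invertible, hence bounded away from zero and infinity on compact neighborhoods), and the pointwise equation $\phi^* h = cg$ holds everywhere, hence a.e. For \eqref{weak_form} $\Rightarrow$ \eqref{1qc_anal}, a locally bi-Lipschitz map lies in $W^{1,n}_{loc}$ and is localizable (being continuous), and the relation $\phi^* h = cg$ forces $g^{-1}\phi^* h = c\,I$, so the distortion inequality $\norm{D\phi}_g^n \leq \Detg{D\phi}{g}$ holds with $K=1$; the Jacobian has locally constant sign because $\phi$ is a local homeomorphism. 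Thus $\phi$ is Riemannian $1$-quasiregular.

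The substantive implication is \eqref{1qc_anal} $\Rightarrow$ \eqref{rdif}, and this is where I would invest the real effort. The strategy is to show $\phi$ is locally a composition $v^{-1} \circ u$ of two $n$-harmonic coordinate charts. Fix $x_0 \in M$. By Theorem~\ref{p-harm-coord} choose $n$-harmonic coordinates $v$ near $\phi(x_0)$ in $(N,h)$; these have $C^{r+1}_*$ regularity. Set $u = v \circ \phi = \phi^* v$. By Proposition~\ref{pullback_of_nh}, since $\phi$ is Riemannian $1$-quasiconformal (here one must first upgrade $1$-quasiregular to $1$-quasiconformal locally — see the obstacle below), each component $u^l = \phi^* v^l$ is $n$-harmonic on $(M,g)$. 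To conclude that $u$ forms an $n$-harmonic coordinate chart, one needs its differential to be invertible; this follows because $\phi$ is a local homeomorphism with nonvanishing Jacobian (Theorem~\ref{rqr}(3)) and $Dv$ is invertible, so $Du = Dv \circ D\phi$ is invertible a.e., and the regularity statement of Theorem~\ref{p-harm-coord} then gives that $u$ is a genuine $C^{r+1}_*$ coordinate system with $\nabla u$ nonvanishing. Consequently $\phi = v^{-1} \circ u$ is a composition of two $C^{r+1}_*$ charts and is a local $C^{r+1}_*$ diffeomorphism. The final bootstrap from $C^{r+1}_*$ to $C^{r+1}$ when $r$ is an integer is handled exactly as in the proof of Theorem~\ref{thm_c1_conformal_regularity}, via the Christoffel-symbol transformation identity, which is legitimate once we know $\phi \in C^2$.

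The main obstacle is justifying that a Riemannian $1$-quasiregular map may be treated \emph{locally} as a $1$-quasiconformal map, so that Proposition~\ref{pullback_of_nh} (stated for quasiconformal $\phi$) applies and so that the coordinate statement makes sense. The point is that quasiregular maps are a priori only open and discrete, not injective; but the needed conclusion is local and the regularity is established once we know $\phi$ is a local diffeomorphism. I would resolve this by a bootstrapping argument: first invoke Lemma~\ref{1eps}, choosing coordinates in which $g$ and $h$ are close to the identity (possible by Theorem~\ref{p-harm-coord}), to realize the coordinate representation of $\phi$ as an honest $(1+\eps)$-quasiregular map on a Euclidean domain, which by the standard theory is continuous, open, and discrete. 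The delicate step is showing that the argument producing the $n$-harmonic chart $u$ does not require injectivity of $\phi$ outright: one works on a small ball where $\phi$ is injective, which holds near any point where the map behaves like a local homeomorphism. Establishing this local injectivity in the low-regularity setting — before one knows $\phi$ is $C^1$ — is the genuinely technical part, and I would expect to lean on the discreteness and openness from quasiregular theory together with the a.e.\ invertibility of $D\phi$ from Theorem~\ref{rqr} to pin down a neighborhood on which $v \circ \phi$ is a well-defined $n$-harmonic chart, after which the regularity conclusion propagates back to $\phi$ itself.
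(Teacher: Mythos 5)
Your overall route is the same as the paper's: the same cycle \eqref{rdif} $\Rightarrow$ \eqref{1dif} $\Rightarrow$ \eqref{weak_form} $\Rightarrow$ \eqref{1qc_anal} $\Rightarrow$ \eqref{1qc_anal}-to-\eqref{rdif}, with the soft implications argued exactly as in the paper (bi-Lipschitz gives $W^{1,n}_{loc}$ and localizability, constant sign of the Jacobian via the local degree), and the substantive implication done by pulling back $n$-harmonic coordinates $v$ through $\phi$ (Proposition~\ref{pullback_of_nh}), writing $\phi = v^{-1}\circ u$, and treating integer $r$ by the Christoffel-symbol identity. So the comparison reduces to the one step you defer.

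That deferred step is a genuine gap, and your proposed way of closing it would fail. You hope to obtain local injectivity of $\phi$ from openness and discreteness of quasiregular mappings together with the a.e.\ invertibility of $D\phi$ from Theorem~\ref{rqr}. But every non-constant quasiregular map is open and discrete with a.e.\ nonvanishing Jacobian (Reshetnyak's theorem), and this does not prevent branching: the winding map of $\R^3$, given in cylindrical coordinates by $(r,\theta,z)\mapsto(r,2\theta,z)$, has all of these properties and is not locally injective along the $z$-axis. Equally telling, the implication you are proving is simply false for $n=2$ (the paper's own remark: $z\mapsto z^2$ is $1$-quasiregular but not locally injective), so the hypothesis $n\geq 3$ must enter through something much stronger than degree-theoretic generalities — and in your outline it never enters at all. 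What the paper actually invokes here is a hard theorem of Iwaniec and Martin (Theorem 8.13.1 of~\cite{Iwaniec}), a quantitative stability version of the generalized Liouville theorem: in dimension $n\geq 3$ there exists $\eps=\eps(n)>0$ such that every $(1+\eps)$-quasiregular mapping between domains of $\R^n$ is locally injective. Only after choosing, via Theorem~\ref{p-harm-coord}, coordinates in which $g$ and $h$ are $\delta$-close to the identity at $p$ and $\phi(p)$, and applying Lemma~\ref{1eps} to make the coordinate representation of $\phi$ honestly $(1+\eps)$-quasiregular, does this theorem make $\phi$ a homeomorphism $U\to V$ on small enough sets, after which your pullback argument proceeds as in the paper. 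A secondary, smaller leap: a.e.\ invertibility of $Du=Dv\,D\phi$ does not by itself make $u$ a $C^1$ diffeomorphism, which is what the regularity clauses of Theorems~\ref{p-harm-coord} and~\ref{A-harm-coord} require; the paper handles the inverse by noting that $\phi^{-1}$ is again Riemannian $1$-quasiconformal (Theorem~\ref{rqr}) and repeating the argument for it. But this point is minor compared with the missing injectivity theorem, which is where the analytic depth (and the restriction $n\geq 3$) of the statement really lives.
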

\begin{proof}
I. We show that~\eqref{1qc_anal} implies~\eqref{rdif}. First we establish some topological properties of $\phi$. Let $p\in M$ and let $U$ be a neighborhood of $p$ and let $v=(v_1,\ldots,v_n)$ be $C^{r+1}_*$ regular $n$-harmonic coordinates around $\phi(p)$ such that $\phi(U)\subset V$. 

Since $g$ and $h$ are continuous, the coordinate representation of $\phi$ satisfies the $\R^n$--definition of a quasiregular mapping~\cite{Liimatainen}. We denote the coordinate representation of $\phi$ simply by $\phi:U\to V$, where $U$ and $V$ are open in $\R^n$. By Theorem 8.13.1 of~\cite{Iwaniec}, there is an $\e>0$ such that every $(1+\e)$-quasiregular mapping $\phi:U\to V$ is locally injective. According to Theorem~\ref{p-harm-coord} and Lemma~\ref{1eps} we can reduce $U$ and $V$ such that $\phi:U\to V$ is $(1+\e)$-quasiregular mapping on $\R^n$. Therefore, by Theorem~\ref{rqr} and by the remarks above, we can assume that $\phi:U\to V$ is a homeomorphism and thus quasiconformal (see e.g.~\cite[Thm. 7.7.1, Thm. 16.12.1]{Iwaniec}).

Proposition \ref{pullback_of_nh} applied to $\phi: (U,g) \to (V,h)$ shows that the pullback $u = \phi^* v$ of the $n$-harmonic coordinates $v=(v_1,\ldots,v_n)$ on $V$ gives an $n$-tuple, $u=(u^1,\ldots u^n)$, of $n$-harmonic functions on $U$ and we can express $\phi:U\to V$ as
\begin{equation*}
 \phi = v^{-1}\circ u.
\end{equation*}
In particular, since $v$ is a $C^1$ diffeomorphism and since $n$-harmonic functions are $C^1$ regular, we see that $\phi$ is a $C^1$ map. By Theorem \ref{rqr} the inverse of $\phi:(U,g)\to (V,h)$ is also Riemannian $1$-quasiconformal and thus we can apply the same argument to show that $\phi^{-1}$ is $C^1$. Then $\phi: U \to V$ is a $C^1$ diffeomorphism, and in particular its derivative is invertible. Since $u = v \circ \phi$ we see that the gradient of each $u^j$ is nonvanishing in $U$, and the regularity of $n$-harmonic functions (Proposition \ref{Aharmonic_regularity}) shows that $u$ and also $\phi$ are actually $C^{r+1}$ regular at least as long as $r$ is not an integer. The case $r\in \N$ is handled by the exact same argument as in the proof of Theorem \ref{thm_c1_conformal_regularity}.

II. One trivially has \eqref{rdif} $\implies$ \eqref{1dif} $\implies$ \eqref{weak_form}. 
Finally, a local bi-Lipschitz mapping $M\rightarrow N$ is localizable as a continuous mapping. Also its Jacobian determinant has locally constant sign, since the local degree of $\phi$ is constant and equals $J_\phi$ a.e. (cf.~\cite[Thm. 3.3.4]{AstalaIwaniecMartin}). Since locally the Euclidean norm and the distance metric $d$ are equivalent, the coordinate representation of $\phi$ is a locally bi-Lipschitz mapping on $\R^n$ and thus belongs to the Sobolev class $W^{1,p}_{loc}$ for all $p>1$. Thus \eqref{weak_form} implies \eqref{1qc_anal}.
\end{proof}

In the previous theorem we considered both topological and regularity properties of Riemannian $1$-quasiregular mappings simultaneously. We observed that even without a local injectivity assumption the mapping actually is a local diffeomorphism. If we assume that the mapping is a priori topologically a homeomorphism, we have the following statement that is also valid (and well known) in two dimensions. The proof is analogous to that of the previous theorem.

\begin{thm}\label{mainthm2}
Let $(M,g)$ and $(N,h)$ be Riemannian manifolds, $n\geq 2$, with $g, h\in C^r$, $r>1$. Let $\phi: M\rightarrow N$ be a homeomorphism. The following are equivalent:
\begin{align*}
 &\phi  \mbox{ is a Riemannian $1$-quasiconformal mapping}, \\
 &\phi  \mbox{ is locally bi-Lipschitz and } \phi^*h=c\,g \mbox{ a.e.,} \\ 
 &\phi  \mbox{ is a $C^1$ diffeomorphism and } \phi^*h=c\,g, \\
 &\phi  \mbox{ is a $C^{r+1}$ diffeomorphism and } \phi^*h=c\,g.
\end{align*}
\end{thm}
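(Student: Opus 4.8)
The plan is to prove the four conditions equivalent by establishing a cycle of implications, with the only substantial step being that Riemannian $1$-quasiconformality implies the $C^{r+1}$ diffeomorphism property. The remaining implications $(C^{r+1}\text{ diffeo}) \Rightarrow (C^1\text{ diffeo}) \Rightarrow (\text{bi-Lipschitz}) \Rightarrow (1\text{-quasiconformal})$ are obtained exactly as in the proof of Theorem~\ref{mainthm}: the first two are trivial, and the last holds because a locally bi-Lipschitz homeomorphism is localizable as a continuous map, has Jacobian determinant of locally constant sign (by the local degree argument), and lies in $W^{1,p}_{loc}$ for all $p>1$, so that together with $\phi^*h=cg$ it satisfies the distortion inequality with $K=1$.

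For the main implication, fix $x_0\in M$ and choose, by Theorem~\ref{p-harm-coord}, a $C^{r+1}_*$ regular system $v=(v^1,\dots,v^n)$ of $n$-harmonic coordinates near $\phi(x_0)$, shrinking domains so that $\phi:U\to V$ is a homeomorphism onto $V$. Because $\phi$ is assumed from the outset to be a homeomorphism, I would skip entirely the topological argument (Iwaniec's local injectivity theorem together with Lemma~\ref{1eps}) that forced $n\geq 3$ in Theorem~\ref{mainthm}; this is precisely what allows the case $n=2$ here. By Proposition~\ref{pullback_of_nh} the pulled-back functions $u=(u^1,\dots,u^n)=v\circ\phi$ are again $n$-harmonic, and by the first part of Proposition~\ref{Aharmonic_regularity} each $u^i$ is $C^{1+\alpha}$. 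Hence $\phi=v^{-1}\circ u$ is $C^{1+\alpha}$, and in particular $C^1$.

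I would then run the same argument for $\phi^{-1}$, which is Riemannian $1$-quasiconformal by Theorem~\ref{rqr}(4) (since $K^{n-1}=1$ when $K=1$), to conclude that $\phi^{-1}$ is likewise $C^1$. Differentiating the identity $\phi\circ\phi^{-1}=\mathrm{id}$ for these two $C^1$ maps shows that $D\phi$ is invertible everywhere, so $\phi$ is a $C^1$ diffeomorphism and $\phi^*h=cg$ holds with a continuous positive $c$. Consequently $Du=Dv|_{\phi}\,D\phi$ is invertible, each $\nabla u^i$ is nonvanishing, and the upgrade in Proposition~\ref{Aharmonic_regularity} applies to give $u^i\in C^{r+1}_*$. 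Therefore $\phi=v^{-1}\circ u$ is $C^{r+1}_*$; when $r$ is not an integer this yields $\phi\in C^{r+1}$ directly, and when $r$ is an integer the Christoffel-symbol identity used in the proof of Theorem~\ref{thm_c1_conformal_regularity} upgrades the regularity from $C^{r+1-\eps}$ to $C^{r+1}$.

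The step I expect to require the most care is the passage from $C^1$ to a genuine \emph{local diffeomorphism}: one must exclude vanishing of the Jacobian, since Proposition~\ref{Aharmonic_regularity} delivers $C^{r+1}_*$ regularity only where $\nabla u^i\neq 0$. The clean route, as indicated above, is to treat $\phi$ and $\phi^{-1}$ symmetrically and invoke $J_\phi\neq 0$ a.e.\ from Theorem~\ref{rqr}, rather than to argue directly that the $n$-harmonic functions $u^i$ have nonvanishing gradient. Once this is in place, the remainder is bookkeeping of Hölder and Zygmund exponents identical to the earlier proofs.
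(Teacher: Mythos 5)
Your proposal is correct and follows essentially the same route as the paper: the paper proves Theorem \ref{mainthm2} by declaring it analogous to Theorem \ref{mainthm}, whose proof likewise pulls back $n$-harmonic coordinates by $\phi$ (and by $\phi^{-1}$, which is $1$-quasiconformal by Theorem \ref{rqr}), with the a priori homeomorphism hypothesis replacing the Iwaniec local-injectivity step and Lemma \ref{1eps} that force $n\geq 3$ there. Your explicit handling of the nonvanishing Jacobian --- treating $\phi$ and $\phi^{-1}$ symmetrically and differentiating $\phi\circ\phi^{-1}=\mathrm{id}$ before invoking the $C^{r+1}_*$ upgrade of Proposition \ref{Aharmonic_regularity} --- is precisely the detail the paper's compressed write-up leaves implicit, so it is a faithful (indeed slightly more careful) rendering of the same argument.
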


\begin{remark}
In two dimensions we cannot expect that $1$-quasiregular mappings are locally invertible, as shown by the map $z\mapsto z^2$ in the complex plane.
\end{remark}

Let us conclude by a simple consequence of the previous results. We call a mapping $(M,g)\to (N,h)$ conformal if it is a homeomorphism and satisfies one (and thus all) of the conditions~\eqref{1qc_anal}--~\eqref{rdif}.

\begin{cor}
 Let $(M,g)$ and $(N,h)$ be Riemannian manifolds with $C^r$, $r>1$, Riemannian metrics. Let $(\phi_j)$ be a sequence of conformal mappings $(M,g)\to (N,h)$ converging uniformly on compact sets to a mapping $\phi:M\to N$ . Then $\phi$ is a $C^{r+1}$ conformal mapping.
\end{cor}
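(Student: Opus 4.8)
The plan is to reduce everything to the local picture used in Theorems~\ref{thm_c1_conformal_regularity} and~\ref{mainthm}, namely to exhibit $\phi$ near each point as $v^{-1}\circ u$ where $v$ is an $n$-harmonic chart on the target and $u=\phi^*v$, and to extract the $n$-harmonicity of $u$ by passing to the limit in the $\mathcal A$-harmonic equation. Fix $p\in M$ and put $q=\phi(p)$; since $\phi_j\to\phi$ locally uniformly, $\phi_j(p)\to q$. Choose $C^{r+1}_*$ $n$-harmonic coordinates $v=(v^1,\ldots,v^n)$ near $q$ in $(N,h)$ from Theorem~\ref{p-harm-coord}, and pick a small neighborhood $U\ni p$ with $\phi(\overline U)$ compactly inside the domain of $v$; by uniform convergence $\phi_j(\overline U)$ lies in that domain for all large $j$. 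Each $\phi_j$ is conformal, so Proposition~\ref{pullback_of_nh} shows that the components $u_j^l:=\phi_j^*v^l$ of $u_j:=\phi_j^*v$ are $n$-harmonic on $U$, i.e.\ each solves $\mdiv\,\mA(x,\nabla u_j^l)=0$ for the \emph{single} operator $\mA$ determined by $g$ through~\eqref{A_def}, independently of $j$.

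The key observation is that this operator does not depend on $j$, so the limit step is a standard stability statement for a fixed nonlinear equation. Since $u_j=v\circ\phi_j\to v\circ\phi=:u$ locally uniformly, the sequence $(u_j^l)$ is locally uniformly bounded; Caccioppoli estimates for the $\mA$-harmonic equation then give uniform $W^{1,n}_{loc}(U)$ bounds, and the monotonicity structure of $\mA$ (the same as in Proposition~\ref{Aharmonic_dirichlet}, cf.~\cite{HKM}) implies that locally uniform limits of solutions with such bounds are again solutions. Hence each $u^l$ is $n$-harmonic on $U$. Because $\mA\in C^r$, Proposition~\ref{Aharmonic_regularity} gives that $u$ is $C^{1,\alpha}$ and is $C^{r+1}_*$ on any set where $\nabla u^l\neq 0$.

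It remains to see that $u$, equivalently $\phi$, is a local diffeomorphism, and here I would bring in the quasiconformal compactness theory already used in this paper. Passing to charts, the continuity of $g,h$ together with Lemma~\ref{1eps} presents the $\phi_j$ as $K$-quasiconformal homeomorphisms on subsets of $\mathbb R^n$ with a common $K$, and the standard limit theorem~\cite{Iwaniec,Rickman,Vaisala} says that the locally uniform limit $\phi$ is either constant or a $K$-quasiconformal embedding. In the nonconstant case $\phi$ is open, discrete, and has a.e.\ nonvanishing Jacobian, so $\nabla u^l\neq 0$ and $Du(p)$ is invertible; thus $u$ is a $C^{r+1}_*$ coordinate system and $\phi=v^{-1}\circ u$ is a $C^{r+1}_*$ local diffeomorphism. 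The integer-$r$ case is then upgraded to genuine $C^{r+1}$ exactly as in the proof of Theorem~\ref{thm_c1_conformal_regularity}. That $\phi^*h=c\,g$ (with $K=1$, not merely $K$) follows from the exact $n$-harmonicity obtained above, or from the fact that the Riemannian dilatation is lower semicontinuous under locally uniform limits while being everywhere $\geq 1$: since each $\phi_j$ has Riemannian dilatation $1$, so does $\phi$, i.e.\ $\phi$ is Riemannian $1$-quasiconformal. Applying Theorem~\ref{mainthm} (or Theorem~\ref{mainthm2} once the homeomorphism property is in hand) finishes the argument.

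The main obstacle I anticipate is purely topological rather than analytic: ruling out the degenerate constant limit and, more seriously, promoting the local embedding/diffeomorphism information to the statement that $\phi$ is a \emph{homeomorphism of $M$ onto $N$}, which is what the definition of a conformal mapping requires. Nonconstancy and injectivity of a nonconstant quasiconformal limit come from the cited compactness theory, but surjectivity onto $N$ needs an additional properness or degree argument (for instance comparing the local degrees of the $\phi_j$, which are constant, with that of $\phi$). By contrast the PDE content, Steps one and two, is routine precisely because the operator $\mA$ is fixed along the sequence, so that the delicate issue of preserving the exact dilatation reduces to a standard stability statement for $\mA$-harmonic functions.
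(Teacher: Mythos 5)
Your argument forks at the crucial step---establishing $\phi^*h=c\,g$ with $c>0$---into two alternatives, and the first of these contains a genuine error. You claim that exact conformality of $\phi$ ``follows from the exact $n$-harmonicity obtained above'', i.e.\ from the fact that the components of $u=v\circ\phi$ are $n$-harmonic for the fixed operator $\mA$ determined by $g$. That implication is false: knowing that \emph{one} $n$-harmonic coordinate system on the target pulls back to $n$-harmonic functions says nothing about conformality. Take $g$ and $h$ Euclidean on $\mR^n$ and $v=\mathrm{id}$ (Cartesian coordinates are $n$-harmonic); then \emph{every} invertible linear map $\phi$ pulls $v$ back to linear, hence $n$-harmonic, functions, yet a generic linear map is not conformal. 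The paper proves only the direction ``conformal $\Rightarrow$ pullbacks of $n$-harmonic functions are $n$-harmonic'' (Proposition~\ref{pullback_of_nh}); the converse is a nontrivial statement about $n$-harmonic morphisms, is nowhere available in the paper, and in the single-chart form you use it is simply wrong. Consequently the PDE portion of your proof---the stability of $\mA$-harmonic functions under locally uniform limits, which is correct in itself---cannot by itself deliver the conclusion.

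What makes your proposal work is your second alternative: lower semicontinuity of the Riemannian dilatation, i.e.\ closure of the class of Riemannian $1$-quasiregular mappings under locally uniform limits. But that is exactly, and in its entirety, the paper's proof: conformal maps are Riemannian $1$-quasiregular, the limit is Riemannian $1$-quasiregular by the results of~\cite{Liimatainen}, and Theorem~\ref{mainthm} (or Theorem~\ref{mainthm2}) finishes. Once this fact is invoked, your $n$-harmonic stability argument and the Euclidean compactness theory become redundant, since Theorem~\ref{mainthm} already contains the passage from $1$-quasiregularity to a $C^{r+1}$ conformal diffeomorphism. On the other hand, the two obstacles you flag at the end are real, and the paper's three-line proof silently ignores them: the limit can indeed be constant ($\phi_j(x)=x/j$ on Euclidean $\mR^n$ gives conformal homeomorphisms converging locally uniformly to a constant, so the corollary as literally stated needs a nonconstancy hypothesis), and upgrading the local diffeomorphism provided by Theorem~\ref{mainthm} to a homeomorphism of $M$ onto $N$, as the paper's definition of a conformal mapping demands, does require a separate injectivity/surjectivity (degree) argument. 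Your identification of these gaps is more careful than the paper's own treatment of them.
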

\begin{proof}
A conformal mapping is Riemannian $1$-quasiregular. It is well known (see~\cite{Liimatainen}) that the sequence $(\phi_j)$ converges to a Riemannian $1$-quasiregular mapping $\phi$. By the previous results, $\phi$ is a $C^{r+1}$ conformal mapping.
\end{proof}



\appendix
\section{Auxiliary results} \label{sec_appendix}

We first give a simple interpolation lemma.

\begin{lemma}[Interpolation]\label{interpol}
Let $\Omega$ be a bounded open set in $\mR^n$, and suppose that $f \in C^{\alpha}(\overline{\Omega})$ where $0 < \alpha \leq 1$. Let also $K \subset \Omega$ be a compact set, let $\delta_0 = \text{dist}(K, \mR^n \setminus \Omega)$, and let $M > 0$. If $1 \leq p < \infty$ and if 
\begin{gather*}
[f]_{C^{\alpha}(\overline{\Omega})}\leq M, \\
\norm{f}_{L^p(\Omega)} \leq \delta_0^{\frac{n+\alpha p}{p}} M,
\end{gather*}
then 
$$
\norm{f}_{L^{\infty}(K)} \leq C_{n,p,\alpha} M^{\frac{n}{n+\alpha p}} \norm{f}_{L^p(\Omega)}^{\frac{\alpha p}{n+\alpha p}}.
$$
Here $[f]_{C^{\alpha}(\overline{\Omega})} = \sup \left\{ \frac{\abs{f(x)-f(y)}}{\abs{x-y}^{\alpha}} \,;\, x, y \in \overline{\Omega}, x \neq y \right\}$.
\end{lemma}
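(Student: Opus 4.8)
The plan is to estimate $\norm{f}_{L^\infty(K)}$ pointwise. Since $f \in C^{\alpha}(\overline{\Omega})$ is continuous and $K$ is compact, $\abs{f}$ attains its maximum over $K$ at some point $x_0$, so it suffices to bound $t := \abs{f(x_0)} = \norm{f}_{L^\infty(K)}$ and there is nothing to take a supremum over afterwards. The governing mechanism is that the Hölder bound forces $\abs{f}$ to stay comparable to $t$ on a ball around $x_0$, which in turn forces $\norm{f}_{L^p(\Omega)}$ to be large; rearranging this lower bound produces the asserted inequality.

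Concretely, I would set $\rho = \min\{(t/(2M))^{1/\alpha}, \delta_0\}$ and observe that $B(x_0,\rho) \subseteq B(x_0,\delta_0) \subseteq \Omega$, because $x_0 \in K$ gives $\text{dist}(x_0, \mR^n \setminus \Omega) \geq \delta_0$. For $x \in B(x_0,\rho)$ the hypothesis $[f]_{C^{\alpha}(\overline{\Omega})} \leq M$ yields $\abs{f(x)-f(x_0)} \leq M\rho^{\alpha} \leq t/2$, the last step holding since $\rho \leq (t/(2M))^{1/\alpha}$; hence $\abs{f} \geq t/2$ on $B(x_0,\rho)$. Integrating and writing $c_n = \abs{B(0,1)}$ gives
$$
\norm{f}_{L^p(\Omega)}^p \geq \int_{B(x_0,\rho)} \abs{f}^p \,dx \geq (t/2)^p c_n \rho^n.
$$

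I would then split into two cases according to which term realizes the minimum defining $\rho$. If $\rho = (t/(2M))^{1/\alpha}$, substituting $\rho^n = (t/(2M))^{n/\alpha}$ gives $\norm{f}_{L^p(\Omega)}^p \gtrsim t^{p+n/\alpha} M^{-n/\alpha}$; solving for $t$ and simplifying the exponents via $\frac{n/\alpha}{p+n/\alpha} = \frac{n}{n+\alpha p}$ and $\frac{p}{p+n/\alpha} = \frac{\alpha p}{n+\alpha p}$ reproduces exactly the claimed bound. If instead $\rho = \delta_0$, the display only gives the weaker estimate $t \leq C_{n,p}\, \delta_0^{-n/p} \norm{f}_{L^p(\Omega)}$, and this is where the hypothesis $\norm{f}_{L^p(\Omega)} \leq \delta_0^{(n+\alpha p)/p} M$ must be used. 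A short computation shows that this crude bound and the target right-hand side $C M^{n/(n+\alpha p)} \norm{f}_{L^p(\Omega)}^{\alpha p/(n+\alpha p)}$ coincide precisely when $\norm{f}_{L^p(\Omega)} = \delta_0^{(n+\alpha p)/p} M$, so the $L^p$ hypothesis places us on the favorable side and the target bound again holds, provided the final constant $C_{n,p,\alpha}$ is taken at least as large as the constant appearing in this second case.

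The exponent bookkeeping in the first case is entirely routine. The one genuinely delicate point is the second case: the particular exponent $(n+\alpha p)/p$ in the hypothesis on $\norm{f}_{L^p(\Omega)}$ is tailored so that the crude estimate $t \lesssim \delta_0^{-n/p}\norm{f}_{L^p(\Omega)}$, which is all one can extract when the natural radius $(t/(2M))^{1/\alpha}$ exceeds the available room $\delta_0$, stays dominated by the scale-invariant interpolation bound. Verifying this compatibility, and thereby fixing the constant $C_{n,p,\alpha}$, is the main thing to get right; everything else is elementary.
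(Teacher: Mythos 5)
Your proof is correct, and its one delicate step checks out: in your second case the crude bound $t \le C_{n,p}\,\delta_0^{-n/p}\norm{f}_{L^p(\Omega)}$ is indeed dominated by the target, because the inequality $\delta_0^{-n/p}\norm{f}_{L^p(\Omega)} \le M^{\frac{n}{n+\alpha p}}\norm{f}_{L^p(\Omega)}^{\frac{\alpha p}{n+\alpha p}}$ is, after raising both sides to the power $(n+\alpha p)/n$, exactly equivalent to the hypothesis $\norm{f}_{L^p(\Omega)} \le \delta_0^{\frac{n+\alpha p}{p}}M$ (and the degenerate case $t=0$ is trivial, playing the role of the paper's aside about constant $f$). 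Your route shares the paper's core mechanism --- localize at a point $x_0 \in K$ where $\abs{f}$ is largest and use $[f]_{C^{\alpha}(\overline{\Omega})}$ to control the oscillation on a ball $B(x_0,\rho)\subset\Omega$ --- but runs it in the opposite direction. The paper applies the triangle inequality in $L^p(B_\delta)$ to obtain, for \emph{every} $0<\delta\le\delta_0$, the two-term upper bound $\norm{f}_{L^{\infty}(K)} \le C_{n,p}\,\delta^{-n/p}\norm{f}_{L^p(\Omega)} + C_{n,p,\alpha}\,\delta^{\alpha}M$, and then substitutes the balancing radius $\delta = \left(\norm{f}_{L^p(\Omega)}/M\right)^{\frac{p}{n+\alpha p}}$; the $L^p$ hypothesis enters only to guarantee that this radius is at most $\delta_0$, so no case distinction arises. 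You instead derive the lower bound $\norm{f}_{L^p(\Omega)}^p \gtrsim t^p\rho^n$ from the plateau $\abs{f}\ge t/2$, with a radius $\rho$ tied to the unknown $t$ itself, which is what forces the two cases. The trade-off: the paper's version is shorter and uses the hypothesis in a single transparent place (admissibility of the optimal radius), while your version makes it evident why the hypothesis carries precisely the exponent $(n+\alpha p)/p$ --- it is the exact borderline at which the unfavorable case $\rho=\delta_0$ stops being absorbable into the interpolation bound. Both arguments yield the same structure of the constant $C_{n,p,\alpha}$.
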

\begin{proof}
Assume that $f$ is not constant (the lemma is always true if $f$ is constant), and choose a point $x_0 \in K$. Write $B_{\delta} = B_{\delta}(x_0)$. For $0 < \delta \leq \delta_0$, we compute 
\begin{align*}
\norm{f}_{L^p(B_{\delta})} &\geq \norm{f(x_0)}_{L^p(B_{\delta})} - \norm{f - f(x_0)}_{L^p(B_{\delta})} \\
 &= \abs{f(x_0)} \abs{B_{\delta}}^{1/p} - \left( \int_{B_{\delta}} \abs{f(x)-f(x_0)}^p \,dx \right)^{1/p} \\
 &\geq c_{n,p} \delta^{n/p} \abs{f(x_0)} - [f]_{C^{\alpha}(\overline{\Omega})} \left( \int_{\abs{x} < \delta} \abs{x}^{\alpha p} \,dx \right)^{1/p} \\
 &\geq c_{n,p} \delta^{n/p} \abs{f(x_0)} - C_{n,p,\alpha} \delta^{\alpha+n/p} M.
\end{align*}
It follows that 
$$
\norm{f}_{L^{\infty}(K)} \leq C_{n,p} \delta^{-n/p} \norm{f}_{L^p(\Omega)} + C_{n,p,\alpha} \delta^{\alpha} M.
$$
It is enough to choose 
$$
\delta = \left( \frac{\norm{f}_{L^p(\Omega)}}{M} \right)^{\frac{p}{n+\alpha p}}.
$$
\end{proof}

The next lemma shows that $\mA$-harmonic functions belong to the Sobolev space $W^{2,2}_{loc}$ whenever their gradient is nonvanishing. The argument is based on difference quotients and is well known (see e.g.~\cite[Theorem 2.103]{Maly}). However, since we could not find a reference for the precise result needed in this paper, the details are included below. 
See also~\cite[Theorem 2.5]{Pucci}, where it is proven that $\mA$-harmonic functions for $p\leq 2$ belong to $W^{2,p}_{loc}$ even without the additional assumption that the gradient is nonvanishing.

\begin{lemma}\label{sobo22}
Let $u\in W^{1,p}_{loc}(\Omega)$ be an $\mA$-harmonic function, where $\mA$ satisfies the assumptions of Proposition~\ref{Aharmonic_regularity}. Assume also that $\nabla u \neq 0$ in $\Omega$. Then $u\in W^{2,2}_{loc}(\Omega)$.
\end{lemma}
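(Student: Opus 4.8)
The plan is to run Nirenberg's difference quotient method, exploiting that the non-vanishing gradient hypothesis turns the degenerate operator into a uniformly elliptic one on compact subsets. First I would record the a priori information already available: by Proposition~\ref{Aharmonic_regularity} the solution $u$ is $C^{1+\alpha}$, so on any ball $B \Subset \Omega$ the gradient is continuous and, thanks to $\nabla u \neq 0$, satisfies $0 < c_0 \leq \abs{\nabla u} \leq C_1$ for constants depending on $B$. Since the statement is local, it suffices to prove $\partial_s \nabla u \in L^2_{loc}$ for each fixed direction $e_s$. I write $\Delta_h^s w(x) = h^{-1}(w(x+he_s)-w(x))$ for the difference quotient and fix a cutoff $\eta \in C^\infty_c(\Omega)$ with $\eta \equiv 1$ on a slightly smaller set.

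Translating the weak equation $\int_\Omega \mA(x,\nabla u)\cdot\nabla\varphi\,dx = 0$ in the direction $e_s$ and subtracting gives, after dividing by $h$, the identity $\int_\Omega \Delta_h^s[\mA(\cdot,\nabla u)]\cdot\nabla\varphi\,dx = 0$ for all admissible $\varphi$, valid once $\abs{h}$ is smaller than the distance from $\supp{\varphi}$ to $\partial\Omega$. I would then test with $\varphi = \eta^2\,\Delta_h^s u$, a legitimate compactly supported $W^{1,p}$ function, and split the discrete increment of $\mA$ as
$$ \mA(x+he_s,\nabla u(x+he_s)) - \mA(x,\nabla u(x)) = I_h(x) + II_h(x),$$
where $I_h$ keeps the first slot at $x+he_s$ and varies the gradient, and $II_h = \mA(x+he_s,\nabla u(x)) - \mA(x,\nabla u(x))$ varies only the spatial slot. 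The point of this splitting is that $h^{-1}I_h\cdot\Delta_h^s\nabla u$ is exactly the monotonicity expression of Proposition~\ref{Aharmonic_regularity}: it is bounded below by $\delta(\abs{\nabla u(x+he_s)}+\abs{\nabla u(x)})^{p-2}\abs{\Delta_h^s\nabla u}^2$, and on $\supp{\eta}$ the weight $(\abs{\nabla u(x+he_s)}+\abs{\nabla u(x)})^{p-2}$ is bounded below by a positive constant $\lambda_0$ (using $c_0 \leq \abs{\nabla u} \leq C_1$, whether $p\geq 2$ or $1<p<2$). This produces the coercive term $\lambda_0\int\eta^2\abs{\Delta_h^s\nabla u}^2\,dx$.

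It remains to bound the error terms on the other side. Using the structural bounds~\eqref{Aharmregass} together with $c_0\leq\abs{\nabla u}\leq C_1$, the mean value theorem gives $\abs{h^{-1}II_h}\leq \beta C_1^{p-1}$ and, for $\abs{h}$ small enough that the segment joining $\nabla u(x)$ and $\nabla u(x+he_s)$ stays where $\abs{\xi}$ is comparable to $c_0$, also $\abs{h^{-1}I_h}\leq C\abs{\Delta_h^s\nabla u}$, while $\abs{\Delta_h^s u}$ is bounded since $u\in C^1$. Each error term is then controlled by Young's inequality: the terms containing $\Delta_h^s\nabla u$ are split so that a small multiple of $\int\eta^2\abs{\Delta_h^s\nabla u}^2$ is absorbed into the coercive term and the remainder is bounded by a constant (involving $\norm{\nabla\eta}$ and the fixed quantities $c_0,C_1,\beta,\delta$) that is independent of $h$. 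Absorbing yields $\int\eta^2\abs{\Delta_h^s\nabla u}^2\,dx\leq C$ uniformly in $h$; since $\eta\equiv 1$ on the smaller set, the standard characterization of $W^{1,2}$ by uniformly bounded difference quotients gives $\partial_s\nabla u\in L^2$ there. Letting $s$ and the set vary yields $u\in W^{2,2}_{loc}(\Omega)$.

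The main obstacle is the bookkeeping in the degenerate and singular regimes. For $1<p<2$ the weight $\abs{\xi}^{p-2}$ blows up as $\abs{\xi}\to 0$, so the $\xi$-derivative bound $\abs{\partial_{\xi}\mA}\leq\beta\abs{\xi}^{p-2}$ is only useful once the gradient is bounded away from zero, whereas the coercivity weight needs the upper bound on $\abs{\nabla u}$; for $p\geq 2$ the roles are reversed. In both cases it is precisely the hypothesis $\nabla u\neq 0$, made quantitative through the $C^{1+\alpha}$ regularity, that converts all $\mA$-dependent factors into genuine $h$-independent constants and renders the linearized equation uniformly elliptic, which is exactly what the absorption argument requires.
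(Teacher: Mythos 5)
Your proposal is correct and follows essentially the same route as the paper's proof: difference quotients of the weak equation tested against $\varphi = \eta^2 \Delta_h u$, a splitting of the increment of $\mA(x,\nabla u)$ into a gradient-increment part (coercive, with the degenerate weight made uniformly elliptic by $\nabla u \neq 0$ together with the $C^{1+\alpha}$ regularity) and a spatial-increment part (bounded via \eqref{Aharmregass} and the mean value theorem), followed by Young's inequality, absorption, and the difference-quotient characterization of Sobolev functions. The only cosmetic differences are that you normalize the quotients and work one direction at a time, you fix the spatial slot at $x+he_s$ rather than at $x$ in the splitting, and you invoke the assumed monotonicity inequality directly where the paper integrates $\partial_{\xi}\mA$ along the segment joining the two gradients and uses the derived ellipticity \eqref{Aharmonic_reg_assumptions}.
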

\begin{proof}
First recall that $u \in C^{1+\alpha}(\Omega)$ for some $\alpha > 0$~\cite[Thm. 2]{DiBenedetto}, so the condition $\nabla u \neq 0$ may be understood pointwise. We adopt the notation 
$$
\dh f(x) = f(x+h) - f(x).
$$
A function $f$ belongs to $W^{1,p}_{loc}(\Omega)$ if and only if $f \in L^p_{loc}(\Omega)$ and 
\begin{equation*}
 ||\dh f||_{L^p(U)}\leq C|h|
\end{equation*}
for any $U \subset \subset \Omega$ and for sufficiently small $h\in \R^n$~\cite[Theorem 5.8.3]{Evans}. Thus, to prove the claim, we need to show that
\begin{equation*}
||\dh \nabla u||_{L^{2}(U)}\leq C |h|,
\end{equation*}
for any $U\subset \subset \Omega$ and for any $|h|$ sufficiently small.

Since $u$ is $\mA$-harmonic, we have for any test function $\varphi \in W^{1,2}(\Omega)$ that is compactly supported in $\Omega$ and for any $h$ sufficiently small that
\begin{equation}\label{weak_difference}
 \int_\Omega \left[ \mA(x+h,\nabla u(x+h)) - \mA(x,\nabla u(x)) \right] \cdot \nabla \varphi(x) \,dx = 0.
\end{equation}
Choose $V$ so that $U\subset \subset V \subset \subset \Omega$. We will use the test function 
$$
\vp=\eta^2 \dh u\,,
$$
where $\eta \in C^{\infty}_c(V)$, $0 \leq \eta \leq 1$, is a cutoff function satisfying $\eta = 1 \text{ near } \overline{U}$.

We separate the expression in the integrand of \eqref{weak_difference} above into two parts:
\begin{align}
  &\mA(x+h,\nabla u(x+h)) - \mA(x,\nabla u(x)) = I_1(x) + I_2(x), \label{I_separation}\\
  &\quad I_1(x) = \mA(x,\nabla u(x+h))-\mA(x,\nabla u(x)), \nonumber \\
  &\quad I_2(x) =  \mA(x+h,\nabla u(x+h))-\mA(x,\nabla u(x+h)). \nonumber
\end{align}
For the first term we write 
\begin{align*}
 \mA^k&(x,\nabla u(x+h))-\mA^k(x,\nabla u(x))\\
  & \qquad =\int_0^1\frac{\partial}{\partial t} \mA^k(x,t \nabla u(x+h)+(1-t) \nabla u(x)) \,dt \\
  & \qquad =\int_0^1 \partial_{\xi_j} \mA^k(x,\dh^t\nabla u(x)) \dh \p_j u(x) \,dt.
\end{align*}
Here we have denoted
\begin{gather*}
\dh^t \nabla u(x)=t \nabla u(x+h)+(1-t) \nabla u(x).
\end{gather*}
Let us denote by $D\mA(\xi)$ the matrix $(\p_{\xi_j}\mA^k(x,\xi))$, $j,k=1,\ldots,n$, where we have suppressed the $x$ variable from our notation temporarily. Taking the inner product with the gradient of the test function $\varphi=\eta^2 \dh u$ yields
\begin{align*}
  &(D\mA(\dh^t\nabla u) \dh \nabla u) \cdot \nabla \varphi  \\
  &= \eta^2 (D\mA(\dh^t\nabla u)\, \dh \nabla u) \cdot \nabla (\dh u) + 2\eta\dh u  (D\mA(\dh^t\nabla u) \dh \nabla u) \cdot \nabla \eta  \\
  &=S_1+S_2.
\end{align*}
Now, since $\dh \nabla u=\nabla \dh u$, the $S_1$ term satisfies
$$
S_1 \geq c_1 \eta^2|\dh^t\nabla u|^{p-2} |\dh \nabla u|^2.
$$
Here we have used the ellipticity condition~\eqref{Aharmonic_reg_assumptions}. To estimate the $S_2$ term, we use Young's inequality with $\e > 0$, 
\begin{align*}
|S_2|&\leq 2 |D\mA(\dh^t\nabla u)| |\eta\, \dh \nabla u| |\dh u\,  \nabla \eta| \\
  & \leq  |D\mA(\dh^t\nabla u)|\l(\frac{1}{\e}|\dh u\,  \nabla \eta|^2+ \e |\eta\, \dh  \nabla u|^2\r).
\end{align*}

Let us next estimate the inner product of $\nabla \varphi$ and the $I_2$ term in~\eqref{I_separation}. The assumption~\eqref{Aharmregass} and the mean value theorem 
yield
\begin{align*}
  | I_2(x)& \cdot \nabla \varphi(x) | \leq|\mA(x+h,\nabla u(x+h))-\mA(x,\nabla u(x+h))||\nabla \varphi(x)| \\
  & \leq c_2 |h| |\nabla u(x+h)|^{p-1} |\nabla \varphi(x)| \\
  & \leq c_2 |h| |\nabla u(x+h)|^{p-1} \l(|\eta(x) \dh \nabla u(x)| + 2 |\eta \dh u(x) \nabla \eta(x)|\r).
\end{align*}

Until now, we have only used the structural assumptions of $\mA$. We apply next the additional conditions $u \in C^{1+\alpha}$ and $\nabla u \neq 0$ in $\Omega$, so $\abs{\nabla u} \geq \delta > 0$ on $V$. Using these facts, we first see that
\begin{align*}
S_1&\geq c_3 \eta^2 |\dh \nabla u|^2 \\
  |S_2| &\leq  c_4 \l(\frac{1}{\e}|\dh u\,  \nabla \eta|^2+ \e |\eta\, \dh  \nabla u|^2\r) \\
| I_2 \cdot \nabla \varphi | & \leq c_5 |h| \l(|\eta \dh \nabla u| + |\dh u\,\nabla \eta|\r).
\end{align*}
Integrating and using the Cauchy-Schwarz inequality then yields
\begin{align*}
0&= \int_\Omega \left[ I_1+I_2 \right] \cdot \nabla \varphi \,dx \\
  &\geq C_1 ||\eta \dh \nabla u||_{L^2(V)}^2- C_2 \e ||\eta \dh \nabla u||_{L^2(V)}^2 - C_3 |h|\, ||\eta \dh \nabla u||_{L^2(V)} \\
  & - \frac{C_4}{\e} ||\dh u||_{L^2(V)}^2 - C_5 |h|  ||\dh u||_{L^2(V)}.
\end{align*}
Choosing $\e$ and $h$ so small that $C_1-C_2\e - C_3 \abs{h} \geq C_6>0$ and using the mentioned fact about Sobolev spaces that 
$$
||\dh u||_{L^2(V)}\leq C_7|h|,
$$
for all $h$ small enough, yields
$$
||\dh \nabla u||_{L^2(U)}^2 \leq C_8 |h|^2.
$$
By the discussion in the beginning of the proof, this shows the claim.
\end{proof}

\section*{Acknowledgements}

T.L.~is supported by the Finnish National Graduate School in Mathematics and its Applications, and M.S.~is partly supported by the Academy of Finland and an ERC Starting Grant. The authors would like to thank Juha Kinnunen and Xiao Zhong for helpful discussions on $\mA$-harmonic functions, Luca Capogna for explaining the work of Lelong-Ferrand, and Marc Troyanov for pointing out the references \cite{Reshetnyak78} and \cite{Shefel}.

\end{document}